\newcommand{\R}{\mathbb{R}}
\newcommand{\lie}{\mathfrak{g}}
\newcommand{\Ad}{\text{Ad}}
\newcommand{\m}{\mathfrak{m}}
\newcommand{\h}{\mathfrak{h}}
\numberwithin{equation}{section}
\newtheorem{theorem}{Theorem}[section]
\newtheorem{lemma}[theorem]{Lemma}
\newtheorem{corollary}[theorem]{Corollary}
\theoremstyle{definition}
\newtheorem{definition}[theorem]{Definition}
\newtheorem{example}[theorem]{Example}
\theoremstyle{remark}
\newtheorem{remark}[theorem]{Remark}
\begin{document}

\title[$\lambda_1$-extremality of KÄhler-Einstein Metrics]{Riemannian $\lambda_1$-extremal Metrics on Generalized Flag Manifolds}

\author{Kennerson N. S. Lima}
\address{Unidade Acadêmica de Matemática, Universidade Federal de Campina Grande, 
Paraíba, PB}
\email{kennerson@mat.ufcg.edu.br}

\begin{abstract}
In this work, we will establish new classification results concerning $\lambda_1$-extremality for partial flag manifolds using a sufficient and necessary condition, in terms of Lie theoretic data, for a K\"ahler-Einstein metric over a generalized flag manifold to be a critical point for the functional that assigns for each Riemannian metric its first positive eigenvalue of the associated Laplacian. 
\end{abstract}

\keywords{Extremal Metric; First Positive Eigenvalue; Kähler Invariant Metric; Flag manifolds.}
\maketitle

\section{Introduction}
The study of the Laplacian spectrum acting on smooth functions defined on a Riemannian manifold is widely explored with particular focuse on its first eigenvalue. Many recent works have pointed out the importance of critical points of the functional that, for each metric in some appropriate space of Riemannian metrics, assigns its first eigenvalue of the Hodge Laplacian operator. In \cite{Soufi}, ILIAS, S. and SOUFI, A. E. present the following concept of extremal metrics for the first eigenvalue functional.

Let $M$ be a compact manifold and be $\mathcal{R}$ the set of all Riemannian metrics on $M$ with fixed volume. The functional $\lambda_1:\mathcal{R}\longrightarrow\mathbb{R}$ that assigns for each $g\in\mathcal{R}$ the first eigenvalue $\lambda_1(g)$ of the Laplacian $\Delta_g=-\text{div}_g\circ\nabla_g$ is continuous. Moreover, if $g_t$ is an analytic curve in $\mathcal{R}$, the function $\lambda(g_t)$ has left and right derivatives w.r.t. $t$.

A metric $g\in\mathcal{R}$ is said to be $\lambda_1$-extremal if for each analytic curve $g_t$ in $\mathcal{R}$, $t\in (-\epsilon,\epsilon)$, with $g_0=g$, we have $$\frac{d}{dt}|_{t=0^+}\lambda_1(g_t)\leq 0\leq \frac{d}{dt}|_{t=0^-}\lambda_1(g_t).$$ 


The abovementioned extremality property guarantees that the manifold with a $\lambda_1$-extremal metric can be minimally and isometrically immersed in a sphere whose radius depends on the first eigenvalue of the extremal metric. This statement is also proved in the work \cite{Soufi} of El Sofi and Ilias.

Let us consider a {\it generalized flag manifold} $G/K$ endowed with a complex structure $J$, where $G$ is a compact connected semisimple Lie group and $K$ is a compact subgroup which coincides with the centralizer in $G$ of a torus.

The set of invariant Kähler metrics on $M=G/K$ (with respect to $J$) with the same volume of the Kähler-Einstein metric $\bar{g}$ can be nicely parametrized in the open {\it Weyl chamber} whose construction we outline in \ref{Section4} of this paper. In the context of this article, the functional given by the first positive eigenvalue will be considered over such an open set.

Panelli and Podestà in \cite{Panelli} obtained a sufficient and necessary condition for the Kähler-Einstein metric $\bar{g}$, defined over a generalized flag manifold $M =G/ K$, to be $\lambda_1$-extremal. It is worth noting that this criterion depends only on some data of the Lie algebra $\mathfrak{g}$ of $G$. This is given in terms of a suitable restriction of the functional, which is the sum of complementary simple roots of $\mathfrak{g}$.

Consequently, in the same paper, we have the following characterization for {\it maximal flag manifolds}.

\begin{theorem} [\cite{Panelli}] Let $G$ be a compact classic Lie group. If $(G/T,J)$ is a maximal flag manifold $(\textrm{dim T}\geq2)$, $\bar{g}$ is $\emph{$\lambda_1^{IK}$-extremal}$ if and only if $G=SU(3)$.
\end{theorem}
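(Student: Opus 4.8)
The plan is to obtain the statement as a direct application of the Panelli--Podestà criterion recalled above. That criterion says that the Kähler--Einstein metric $\bar g$ on a generalized flag manifold $M=G/K$ is $\lambda_1^{IK}$-extremal if and only if a certain restriction of the $\lambda_1$ functional — the one encoded by the sum of the simple roots complementary to those that define $K$ — satisfies the extremality inequalities. For a \emph{maximal} flag manifold $M=G/T$ we have $K=T$, so no simple root is omitted: the relevant Lie-theoretic datum is the full simple system $\Pi=\{\alpha_1,\dots,\alpha_\ell\}$ of $\mathfrak{g}$ together with the Kähler--Einstein parameter $\delta=\sum_{\alpha\in R^{+}}\alpha$. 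So the problem reduces to deciding, for each admissible $G$, whether $\Pi$ is ``balanced'' around $\delta$ in the precise sense dictated by that criterion.

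Concretely I would proceed as follows. First, recall from Section \ref{Section4} the parametrization of invariant Kähler metrics by points $\Lambda$ of the open Weyl chamber, with $\bar g=g_{\delta}$, and normalize by fixing the volume so that the competing metrics form a hypersurface through $g_{\delta}$. Next, write down the finite set of $G$-irreducible summands of $L^{2}(M)$ whose Laplace eigenvalues $\mu_{\xi}(\Lambda)$ are the candidates for $\lambda_1(g_{\Lambda})$ — for a maximal flag these are indexed by the simple roots — and record each $\mu_{\xi}$ as an explicit rational function of $\Lambda$. Then identify the subset $S_{0}$ of those $\xi$ with $\mu_{\xi}(\delta)=\lambda_1(\bar g)$, compute the gradients $\nabla\mu_{\xi}(\delta)$ restricted to the volume-constraint subspace, and invoke the standard convexity lemma for min-type functionals: $\bar g$ is $\lambda_1^{IK}$-extremal if and only if $0$ lies in the convex hull of $\{\nabla\mu_{\xi}(\delta):\xi\in S_{0}\}$ in that subspace. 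These last two steps are exactly what the Panelli--Podestà criterion packages, so in practice I would quote it and verify the balance condition directly.

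The verification is then a finite case analysis by Dynkin type. For $A_{2}$, i.e. $G=SU(3)$ and $M=SU(3)/T^{2}$: the two simple roots have equal length and are exchanged by the diagram automorphism that fixes $\delta$; this forces $S_{0}$ to contain both indices and the two gradients to be antipodal in the one-dimensional constraint subspace, so $0$ is in their convex hull and $\bar g$ is extremal. For $A_{\ell}$ with $\ell\ge 3$, for $B_{\ell}$ and $C_{\ell}$ with $\ell\ge 2$, and for $D_{\ell}$ with $\ell\ge 3$ (noting $D_{2}$ is not simple and $D_{3}\cong A_{3}$), I would show the condition fails: either the two distinct root lengths (types $B$ and $C$) or, in the simply-laced higher-rank cases, the asymmetry of $\delta$ relative to the $\alpha_{i}$ makes $S_{0}$ a proper subset of $\Pi$ or leaves all the gradients in a common open half-space. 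In each such case one then exhibits a tangent direction giving a curve $g_t$ with $\frac{d}{dt}|_{t=0^{+}}\lambda_1(g_{t})>0$, contradicting extremality. Since the hypothesis $\dim T\ge 2$ excludes $A_{1}$, the only surviving case is $A_{2}$, that is, $G=SU(3)$.

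The main obstacle lies in the middle steps: pinning down, uniformly across the four classical families, which $G$-representations realize $\lambda_1(\bar g)$ and producing their eigenvalue gradients in closed form. This needs the explicit description of the Laplace spectrum of an invariant Kähler metric on a flag manifold, together with the branching of the relevant small $G$-modules to $T$; once $S_{0}$ and the gradients are in hand, the convex-hull test is elementary linear algebra. The delicate bookkeeping is in separating the low-rank exceptions ($B_{2}=C_{2}$ and $D_{3}$) from the generic behavior and in handling the two root lengths in types $B$ and $C$, which is where the case analysis concentrates.
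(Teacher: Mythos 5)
First, note that the paper itself offers no proof of this statement: it is quoted from \cite{Panelli} and presented only as a consequence of the criterion \ref{criterion}. Your overall plan --- specialize that criterion to the maximal flag, where $\mathfrak{c}=\mathfrak{h}$, $\Pi_{\mathfrak{m}}=\Pi$ and $\delta_{\mathfrak{m}}=\sum_{\alpha\in\Pi^{+}}\alpha$, and then run a case-by-case check over the classical Dynkin types --- is indeed the route taken in the cited source. But your middle section substitutes for the actual criterion a convex-hull test on ``gradients of the eigenvalue branches realizing $\lambda_1$,'' and this misrepresents the mechanism. For the K\"ahler--Einstein metric one has $\lambda_1(\bar g)=2$ and $E_1$ is a \emph{single irreducible} $G$-module (the Killing potentials), so the quadratic form $Q_h$ is $G$-invariant on $E_1$ and hence equal to $\phi(h)\,B$ for a linear functional $\phi$; all branch derivatives coincide, the ``convex hull'' degenerates to one point, and extremality is the single linear identity $\phi=c\cdot\mathrm{Tr}$ of Lemma \ref{trace}, i.e.\ \ref{criterion}. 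In particular your explanation of the $SU(3)$ case (``the two gradients are antipodal in the constraint subspace'') is not what happens: for $A_2$ one has $B(\alpha_i,\delta)=2$ for the two simple roots, $B(\alpha_1+\alpha_2,\delta)=4$, $\mu=\|\delta\|^2/3=8/3$, and the sum $\sum_{\alpha>0}(\mu/B(\alpha,\delta)-1)\alpha=\tfrac13(\alpha_1+\alpha_2)-\tfrac13(\alpha_1+\alpha_2)$ vanishes identically. Also, the sum in \ref{criterion} runs over all positive roots, not over the simple roots as you assert.

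The second, and more serious, gap is that the case analysis --- which is the entire content of the theorem --- is never carried out. ``The two root lengths make it fail'' and ``the asymmetry of $\delta$ relative to the $\alpha_i$'' are heuristics, not arguments: $G_2$ also has two root lengths yet is simply excluded by the word ``classic,'' and for $A_\ell$ with $\ell\ge 3$ one must actually exhibit a nonvanishing coefficient in $\sum_{\alpha>0}(\mu/B(\alpha,\delta)-1)\alpha$ (for instance by comparing the contribution of a simple root, where $B(\alpha,\delta)$ is small, with that of the highest root, where it is large). Likewise the proposed construction of ``a curve $g_t$ with positive right derivative'' in the non-extremal cases is redundant once \ref{criterion} is accepted as an equivalence, and is not substantiated. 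As written, the proposal reduces the theorem to the correct finite computation but performs none of it, so neither direction of the equivalence is actually established beyond the $A_2$ case sketched above.
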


We will apply the condition mentioned above in \cite{Panelli} to classify families of {\it partial} flag manifolds according to their number of isotropic summands. Furthermore, for the special of partial flag manifolds with three isotropic summands given by $SU(l+m+n)/S(U(l)\times U(m)\times U(n))$, $l,m,n\in\mathbb{Z},$ $l,m,n\geq 1$, we will establish a sufficient and necessary condition for its Kähler-Einstein metric to be $\lambda_1$-extremal in terms of $l,m,n$.

The case of partial flag manifolds imposes the difficulty of treating each of their families in a particular way according to the dimensions and number of their respective isotropic summands. In contrast, in the maximal ones, these data have the same description for all families. Such Lie theoretic data are fundamental in the construction and application of the criteria we use here.

\section{First Eigenvalue and Extremal Metrics}

\label{Section1}


Berger, in his work \cite{Berger}, gives the following descrption for the lateral derivatives above mentioned in our Introduction. 

\begin{lemma} \emph{(\cite{Berger})} Given a metric $\bar{g}$ with first positive eigenvalue $\lambda_1(\bar{g})$ of multiplicity $m$, for each analytic curve $g_t$ with $t\in (-\epsilon,\epsilon)$ and $g_0=\bar{g}$, there are $L^2(g_t)$-orthogonal functions $u^1_t, \ldots, u^m_t\in C^{\infty}(M)$ and $\Lambda^1_t, \ldots,\Lambda^m_t \in \R$, depending analytically on $t$, such that for all $j=1,\ldots,m$ one has $\Lambda^j_0=\lambda_1(\bar{g})$ and $$\Delta_{g_t}u^j_t=\Lambda^j_t\cdot u^j_t, \ \ \forall \ \ t\in (-\epsilon,\epsilon).$$
\end{lemma}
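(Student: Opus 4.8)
The plan is to reduce the statement to the Kato--Rellich theory of analytic perturbations of a self-adjoint operator with compact resolvent. The only genuine subtlety is that the operators $\Delta_{g_t}$ act on the Hilbert spaces $L^2(M,dV_{g_t})$, which vary with $t$, so the first step is to transport everything to the fixed space $\mathcal{H}=L^2(M,dV_{\bar g})$. Since $t\mapsto g_t$ is analytic, the density $\rho_t$ defined by $dV_{g_t}=\rho_t\,dV_{\bar g}$ is a positive smooth function on $M$ depending analytically on $t$, with $\rho_0\equiv 1$. The multiplication operator $U_t f=\rho_t^{1/2}f$ is then a unitary isomorphism $L^2(M,dV_{g_t})\to\mathcal{H}$, analytic in $t$, and $A_t:=U_t\,\Delta_{g_t}\,U_t^{-1}$ is a family of self-adjoint operators on $\mathcal{H}$ with common domain $H^2(M)$ (the Sobolev space, which does not depend on the choice of background metric since $M$ is compact). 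Writing $\Delta_{g_t}$ in local coordinates and conjugating by $\rho_t^{\pm1/2}$, one checks that $t\mapsto A_t$ is an analytic family of type (A) in Kato's sense: for each fixed $f\in H^2(M)$ the map $t\mapsto A_t f\in\mathcal{H}$ is real-analytic.

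Next I would invoke the Kato--Rellich theorem. Because $M$ is compact, every $A_t$ has compact resolvent and hence discrete spectrum. Fix a small circle $\gamma\subset\mathbb{C}$ centered at $\lambda_1(\bar g)$ enclosing no other eigenvalue of $A_0=\Delta_{\bar g}$; shrinking $\epsilon$ if necessary, $\gamma$ meets the spectrum of no $A_t$, so the Riesz spectral projection $P_t=\tfrac{1}{2\pi i}\oint_\gamma (zI-A_t)^{-1}\,dz$ is an analytic family of orthogonal projections of constant rank $m$. By the standard intertwining argument (Sz.-Nagy, Kato), there is an analytic family of unitaries $W_t$ on $\mathcal{H}$ with $W_0=I$ and $W_t P_0 W_t^{-1}=P_t$; then $B_t:=W_t^{-1}A_t W_t$ leaves the fixed $m$-dimensional subspace $\operatorname{Ran}(P_0)$ invariant and restricts there to an analytic family of Hermitian $m\times m$ matrices. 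By Rellich's theorem for analytic families of Hermitian matrices, the eigenvalues of $B_t|_{\operatorname{Ran}(P_0)}$ are analytic functions $\Lambda^1_t,\dots,\Lambda^m_t$ (repeated according to multiplicity), with an accompanying analytic orthonormal basis of eigenvectors $w^1_t,\dots,w^m_t$ of $\operatorname{Ran}(P_0)$; at $t=0$ every $\Lambda^j_0$ equals $\lambda_1(\bar g)$, the unique eigenvalue of $A_0$ inside $\gamma$.

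Finally I would transport back. Setting $v^j_t:=W_t w^j_t\in\mathcal{H}$, these are orthonormal in $\mathcal{H}$ and satisfy $A_t v^j_t=\Lambda^j_t v^j_t$; hence $u^j_t:=U_t^{-1}v^j_t=\rho_t^{-1/2}v^j_t$ are $L^2(M,dV_{g_t})$-orthonormal (in particular orthogonal) and solve $\Delta_{g_t}u^j_t=\Lambda^j_t u^j_t$. All of these objects depend analytically on $t$ because $U_t$, $W_t$, $P_t$ and the finite-dimensional data do. Elliptic regularity for the smooth elliptic operator $\Delta_{g_t}$ promotes each $u^j_t$ to $C^\infty(M)$, and $\Lambda^j_0=\lambda_1(\bar g)$, which is exactly the assertion.

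The main point requiring care -- indeed the only one -- is the verification, after the reduction to the fixed Hilbert space, that the hypotheses of Kato's analyticity theorem hold: namely that $t\mapsto A_t$ is a bona fide analytic family of type (A) (equivalently, that the coefficients of $\Delta_{g_t}$ together with the conjugation by $\rho_t^{\pm1/2}$ depend analytically on $t$ with values acting on the fixed domain $H^2(M)$), and that the rank of the spectral projections $P_t$ is locally constant. Once this is established, the rest is a direct application of Kato--Rellich, and the passage back through the unitaries $U_t$ and $W_t$ is routine.
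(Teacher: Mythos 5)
The paper does not prove this lemma at all; it is quoted verbatim from Berger's Lecture Notes volume, so there is no in-text argument to compare against. Your proposal supplies the standard proof that underlies the cited result, and it is correct: the reduction to a fixed Hilbert space via the unitary $f\mapsto\rho_t^{1/2}f$ (where $dV_{g_t}=\rho_t\,dV_{\bar g}$), the identification of $A_t=U_t\Delta_{g_t}U_t^{-1}$ as an analytic family of type (A) with common domain $H^2(M)$ and compact resolvent, the Riesz projection of constant rank $m$ around $\lambda_1(\bar g)$ together with Kato's intertwining unitaries, and Rellich's theorem on analytic diagonalization of Hermitian matrix families are exactly the ingredients of the Rellich--Kato--Berger argument. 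Transporting back through $U_t$ correctly yields $L^2(g_t)$-orthogonality, and elliptic regularity gives smoothness.

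Two minor points worth tightening if you write this out in full. First, Kato's theory of type (A) families is stated for holomorphic dependence on a complex parameter; since $g_t$ is real-analytic you should say explicitly that the coefficients of $A_t$ extend holomorphically to a complex neighborhood of $t=0$ (the extended operators are closed with the same domain but no longer self-adjoint), and that Rellich's one-parameter theorem is then applied to the self-adjoint restriction to real $t$. Second, the constancy of $\operatorname{rank}P_t$ is automatic from the norm-continuity of $t\mapsto P_t$ (projections at distance less than $1$ have equal rank), so it is not an additional hypothesis to verify. Neither point is a gap; the proof is sound.
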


From the above lemma, for $t$ sufficiently small, we have $\lambda_1(g_t)=\displaystyle\min_{1\leq j\leq m}\{\Lambda^j_t\}$ and 
$$\frac{d}{dt}|_{t=0^+}\lambda_1(g_t)=\displaystyle\min_{1\leq j\leq m}\left\{\frac{d}{dt}|_{t=0}\Lambda^j_t\right\}, \ \  \frac{d}{dt}|_{t=0^-}\lambda_1(g_t)=\displaystyle\max_{1\leq j\leq m}\left\{\frac{d}{dt}|_{t=0}\Lambda^j_t\right\}.$$ 

\begin{lemma} \emph{(\cite{Berger})} Considering the above notations, one has $$\frac{d}{dt}|_{t=0}\Lambda^j_t=-\int_M\left\langle q(u_j),h\right\rangle d\mu_{\bar{g}},$$ where $h$ is the symmetric tensor given by $\frac{d}{dt}|_{t=0}g_t$, $u_j=u^j_0$ and for all $u\in C^{\infty}(M)$ $$q(u)=du\otimes du + \frac{1}{4}\Delta_{\bar{g}}u^2\cdot \bar{g}.$$
\end{lemma}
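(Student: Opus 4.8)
The plan is to differentiate the Rayleigh quotient along the analytic branch. For each $j$ the eigenvalue equation $\Delta_{g_t}u^j_t=\Lambda^j_t u^j_t$ together with integration by parts gives
$$\Lambda^j_t=\frac{\int_M|\nabla_{g_t}u^j_t|^2_{g_t}\,d\mu_{g_t}}{\int_M(u^j_t)^2\,d\mu_{g_t}}=:R(u^j_t,g_t),$$
so that $\Lambda^j_t$ is the value of the Rayleigh functional $R(\,\cdot\,,g_t)$ at $u^j_t$. Since $u_j=u^j_0$ is an eigenfunction of $\Delta_{\bar{g}}$, it is a critical point of $v\mapsto R(v,\bar{g})$ on $C^\infty(M)\setminus\{0\}$: the first variation of $R(\,\cdot\,,\bar{g})$ at $v$ in a direction $w$ equals $\frac{2}{\int_M v^2\,d\mu_{\bar{g}}}\int_M\big(\Delta_{\bar{g}}v-R(v,\bar{g})v\big)w\,d\mu_{\bar{g}}$, which vanishes for all $w$ exactly when $v$ is an eigenfunction. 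Hence in $\frac{d}{dt}|_{t=0}\Lambda^j_t$ the contribution of $\frac{d}{dt}|_{t=0}u^j_t$ drops out, and it is enough to compute the partial derivative of $R(u_j,g_t)$ with respect to the metric, keeping $u_j$ fixed.

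For the metric variation I would use the standard infinitesimal identities: writing $g_t=\bar{g}+th+o(t)$ one has $\frac{d}{dt}|_{t=0}g_t^{ij}=-h^{ij}$ (indices raised with $\bar{g}$) and $\frac{d}{dt}|_{t=0}d\mu_{g_t}=\tfrac12(\text{tr}_{\bar{g}}h)\,d\mu_{\bar{g}}$. Differentiating numerator and denominator of $R(u_j,g_t)$ with $u_j$ held fixed, and using the normalizations $\int_M u_j^2\,d\mu_{\bar{g}}=1$ and $\int_M|\nabla_{\bar{g}}u_j|^2_{\bar{g}}\,d\mu_{\bar{g}}=\lambda_1(\bar{g})$, yields
$$\frac{d}{dt}\Big|_{t=0}\Lambda^j_t=-\int_M h(\nabla_{\bar{g}}u_j,\nabla_{\bar{g}}u_j)\,d\mu_{\bar{g}}+\frac12\int_M\big(|\nabla_{\bar{g}}u_j|^2_{\bar{g}}-\lambda_1(\bar{g})\,u_j^2\big)(\text{tr}_{\bar{g}}h)\,d\mu_{\bar{g}},$$
where the first integrand is precisely $\langle du_j\otimes du_j,h\rangle$.

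It remains to recognize the second term. Applying $\Delta_{\bar{g}}=-\text{div}_{\bar{g}}\circ\nabla_{\bar{g}}$ to $u_j^2$ and inserting $\Delta_{\bar{g}}u_j=\lambda_1(\bar{g})u_j$ gives the pointwise identity $\Delta_{\bar{g}}(u_j^2)=2\lambda_1(\bar{g})u_j^2-2|\nabla_{\bar{g}}u_j|^2_{\bar{g}}$, hence $\tfrac12\big(|\nabla_{\bar{g}}u_j|^2_{\bar{g}}-\lambda_1(\bar{g})u_j^2\big)=-\tfrac14\Delta_{\bar{g}}(u_j^2)$. Substituting this and using $\langle\bar{g},h\rangle=\text{tr}_{\bar{g}}h$ collapses the right-hand side to $-\int_M\big\langle du_j\otimes du_j+\tfrac14\Delta_{\bar{g}}(u_j^2)\,\bar{g},\,h\big\rangle\,d\mu_{\bar{g}}=-\int_M\langle q(u_j),h\rangle\,d\mu_{\bar{g}}$, which is the claim. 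The only genuinely delicate points are the justification that the $\dot u$-terms vanish — handled by the eigenfunction/critical-point observation, which applies uniformly along the analytic branch — and keeping the sign convention for $\Delta_{\bar{g}}$ consistent throughout; everything else is the routine first-variation calculus above. An alternative route is to differentiate $\Delta_{g_t}u^j_t=\Lambda^j_t u^j_t$ directly, pair with $u_j$ in $L^2(\bar{g})$, and use self-adjointness of $\Delta_{\bar{g}}$ to cancel the $\dot u$-terms; it reaches the same formula but requires the heavier computation of the first variation of the Laplacian.
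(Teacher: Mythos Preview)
Your argument is correct and is the standard derivation: differentiate the Rayleigh quotient along the analytic branch, use that eigenfunctions are critical points to drop the $\dot u$-contribution, apply the first-variation formulas for $g_t^{-1}$ and $d\mu_{g_t}$, and then rewrite the trace term via $\Delta_{\bar g}(u_j^2)=2\lambda_1(\bar g)u_j^2-2|\nabla_{\bar g}u_j|^2$. The paper does not supply its own proof of this lemma; it is merely quoted from Berger \cite{Berger}, so there is nothing to compare your route against beyond noting that what you wrote is exactly the classical computation underlying that reference.
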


We thus have the following expressions 

$$\frac{d}{dt}|_{t=0^+}\lambda_1(g_t)=-\displaystyle\max_{1\leq j\leq m}\left\{\int_M\left\langle q(u_j),h\right\rangle d\mu_{\bar{g}}\right\},$$ $$\frac{d}{dt}|_{t=0^-}\lambda_1(g_t)=-\displaystyle\min_{1\leq j\leq m}\left\{\int_M\left\langle q(u_j),h\right\rangle d\mu_{\bar{g}}\right\}.$$

\section{Generalized Flag Manifolds}\label{flag}
	
%
%
	
\begin{definition}	
Let $G$ be a compact semisimple Lie group with Lie algebra $\lie$. A \emph{generalized flag manifold} is the adjoint orbit of a regular element in the Lie algebra $\lie$. A generalized flag manifold is a homogeneous space of the form $G/C(T)$, where $T$ is a torus in $G$ and $C(T)$ is the centralizer of $T$ in $G$. When $T$ is maximal, it is said that $G/C(T)=G/T$ is a \emph{maximal flag manifold}. 
\end{definition}

\begin{example} (Flag manifolds of a classic Lie Group)

Type {\bf A:} $SU(n)/S(U(n_1)\times\ldots\times U(n_s)\times U(1)^m)$, 

$n=n_1+\ldots+n_s+m, n_1\geq n_2\geq\ldots\geq n_s > 1, s, m\geq 0 $

Type {\bf B:} $SO(2n+1)/U(n_1)\times\ldots\times U(n_s)\times SO(2t+1)\times U(1)^m$

Type {\bf C:} $Sp(n)/U(n_1)\times\ldots\times U(n_s)\times Sp(t)\times U(1)^m$

Type {\bf D:} $SO(2n)/U(n_1)\times\ldots\times U(n_s)\times Sp(2t)\times U(1)^m$

$n=n_1+\ldots+n_s+m+t, n_1\geq n_2\geq\ldots\geq n_s > 1, s, m, t\geq 0, t\neq 1 $

\end{example}




Let us describe a generalized flag manifold $G/K$ in terms of the Lie algebra $\lie$ root spaces of $G$. Let $\lie^{\mathbb{C}}$ be a complex semisimple Lie algebra. Given a Cartan subalgebra $\mathfrak{h}^{\mathbb{C}}$ of $\lie^{\mathbb{C}}$, denote by $\Pi$ the set of $roots$ of $\lie^{\mathbb{C}}$ with elation to $\mathfrak{h}^{\mathbb{C}}$. Consider the decomposition $$\lie^{\mathbb{C}}=\mathfrak{h}^{\mathbb{C}}\oplus\sum_{\alpha\in \Pi}\lie_{\alpha}^{\mathbb{C}},$$ where $\lie_{\alpha}^{\mathbb{C}}=\{X\in\lie^{\mathbb{C}};\forall H\in \mathfrak{h}^{\mathbb{C}}, \left[X,H\right]=\alpha(H)X\}$. Let $\Pi^+\subset \Pi$ be a choice of positive roots, $\Sigma$ the correspondent system of simple roots, and $\Theta$ a subset of $\Sigma$. Will be denoted by $\left\langle \Theta\right\rangle$ the span of $\Theta$, $\Pi_M=\Pi\setminus \left\langle \Theta\right\rangle$ the set of the {\it complementary roots} and by $\Pi_{M^+}$ the set of positive complementary roots.

A Lie subalgebra $\mathfrak{p}$ of $\lie^{\mathbb{C}}$ is called {\it parabolic} if it contains a Borel subalgebra of $\lie^{\mathbb{C}}$ (i.e., a maximal solvable Lie subalgebra of $\lie^{\mathbb{C}}$). Take $$\mathfrak{p}_{\Theta}=\mathfrak{h}^{\mathbb{C}}\oplus\sum_{\alpha\in \left\langle \Theta\right\rangle^+}\lie_{\alpha}^{\mathbb{C}}\oplus\sum_{\alpha\in \left\langle \Theta\right\rangle^+}\lie_{-\alpha}^{\mathbb{C}}\oplus\sum_{\beta\in \Pi_{M}^+}\lie_{\beta}^{\mathbb{C}}$$ the canonical parabolic subalgebra of $\lie^{\mathbb{C}}$ determined by $\Theta$ which contains the Borel subalgebra $\mathfrak{b}=\mathfrak{h}^{\mathbb{C}}\oplus\displaystyle\sum_{\beta\in \Pi_+}\lie_{\beta}^{\mathbb{C}}$.

The generalized flag manifold $\mathbb{F}_{\Theta}$ associated with $\lie^{\mathbb{C}}$ is defined as the homogeneous space $$\mathbb{F}_{\Theta}=G^{\mathbb{C}}/P_{\Theta},$$where $G^{\mathbb{C}}$ is the complex compact connected simple Lie group with Lie algebra $\lie$ and $P_{\Theta}=\{g\in G^{\mathbb{C}}; \Ad(g)\mathfrak{p}_{\Theta}=\mathfrak{p}_{\Theta}\}$ is the normalizer of $\mathfrak{p}_{\Theta}$ in $G^{\mathbb{C}}$. 

Let $G$ be the real compact form of $G^{\mathbb{C}}$ corresponding to $\lie$, i.e, $G$ is the connected Lie Group with Lie algebra $\lie$, the compact real form of $\lie^{\mathbb{C}}$. The subgroup $K_{\Theta}=G\cap P_{\Theta}$ is the centralizer of a torus. Furthermore, $G$ acts transitively on $\mathbb{F}_{\Theta}$. Since $G$ is compact, we have that $\mathbb{F}_{\Theta}$ is a compact homogeneous space, that is, $$\mathbb{F}_{\Theta}=G^{\mathbb{C}}/P_{\Theta}=G/G\cap P_{\Theta}=G/K_{\Theta},$$ accordingly characterization given above from adjoint orbits of a regular element of $\lie$. 

There are two classes of generalized flag manifolds. The first occurs when $\Theta=\emptyset$. Therefore, the parabolic subalgebra is given by  $\mathfrak{p}_{\Theta}=\mathfrak{h}^{\mathbb{C}}\oplus\displaystyle\sum_{\beta\in \Pi_+}\lie_{\beta}^{\mathbb{C}}$, that is, is equal to the Borel subalgebra of $\lie^{\mathbb{C}}$ and $T=P_{\Theta}\cap G$ is a maximal torus. In this case, $\mathbb{F}_{\Theta}=G^{\mathbb{C}}/P_{\Theta}=G/G\cap P_{\Theta}=G/T$ is called {\it maximal} flag manifold. When $\Theta\neq\emptyset$, $\mathbb{F}_{\Theta}$ is called {\it generalized} flag manifold. 

We now consider a {\it Weyl basis} for $\lie^{\mathbb{C}}$ given by $\{X_{\alpha}\}_{\alpha\in R}\cup\{H_{\alpha}\}_{\alpha\in \Sigma}\subset \lie^{\mathbb{C}}, \ \ X_{\alpha}\in \mathfrak{g}_{\alpha}^{\mathbb{C}}$. From this basis we determine a basis for $\lie$, the compact real form of $\lie^{\mathbb{C}}$, putting $$\lie=\textrm{span}_{\R}\{\sqrt{-1}H_{\alpha}, A_{\alpha}, S_{\alpha}\},$$ with $A_{\alpha}=X_{\alpha}-X_{-\alpha}$, $S_{\alpha}=\sqrt{-1}(X_{\alpha}+X_{-\alpha})$ ($A_{\alpha}=S_{\alpha}=0$ if $\alpha\notin R$) and $H_{\alpha}\in \mathfrak{h}^{\mathbb{C}}$ is such that $\alpha(\cdot)=\left\langle H_{\alpha},\cdot\right\rangle$, $\alpha\in R$. The {\it structure constants} of this basis are determined by the following relations

$$\left\{
  \begin{array}{rcl} 
  \left[A_{\alpha},S_{\beta}\right]&=& m_{\alpha,\beta}A_{\alpha+\beta}+m_{-\alpha,\beta}A_{\alpha-\beta}\\
  \left[S_{\alpha},S_{\beta}\right]&=& -m_{\alpha,\beta}A_{\alpha+\beta}-m_{\alpha,-\beta}A_{\alpha-\beta}\\
  \left[A_{\alpha},S_{\beta}\right]&=& m_{\alpha,\beta}S_{\alpha+\beta}+m_{\alpha,-\beta}S_{\alpha-\beta}\\
  \end{array}
  \right.,$$
	
	$$\left\{
  \begin{array}{rcl}  
  \left[\sqrt{-1}H_{\alpha},A_{\beta}\right]&=& \beta(H_{\alpha})S_{\beta}\\
 \left[\sqrt{-1}H_{\alpha},S_{\beta}\right]&=& -\beta(H_{\alpha})A_{\beta}\\
  \left[A_{\alpha},S_{\alpha}\right]&=& 2\sqrt{-1}H_{\alpha}\\
  \end{array}
  \right.,$$
	where $m_{\alpha,\beta}$ is such that $\left[X_{\alpha},X_{\beta}\right]=m_{\alpha,\beta}X_{\alpha+\beta}$, with $m_{\alpha,\beta}=0$ if $\alpha+\beta\notin R$ and $m_{\alpha,\beta}=-m_{-\alpha,-\beta}$. We remark that this basis is $-B$-orthogonal and $-B(A_{\alpha},A_{\alpha})=-B(S_{\alpha},S_{\alpha})=2$, where $B$ Cartan-Killing form of $\lie^{\mathbb{C}}$ (the Cartan-Killing form of $\lie^{\mathbb{C}}$ restricted to $\lie$ coincides with the Cartan-Killing form of its compact form $\lie$ and $\h_{\R}=\textrm{span}_{\R}\{\sqrt{-1}H_{\alpha}\}_{\alpha\in R}$ is a Cartan subalgebra of $\lie$). Moreover, if $\mathfrak{q}$ and $\mathfrak{s}$ are the subspaces spanned by $\{H_{\alpha},A_{\alpha}\}$ and $\{S_{\alpha}\}$ respectively, one has $$\left[\mathfrak{q},\mathfrak{q}\right]\subset\mathfrak{q}, \ \  \left[\mathfrak{q},\mathfrak{s}\right]\subset\mathfrak{s}, \ \ \left[\mathfrak{s},\mathfrak{s}\right]\subset\mathfrak{q}.$$
	
The above construction of the Weyl basis can be found, e.g., in \cite{SM}, p. 334. This will be the basis we use when dealing with flag manifolds in this work.

We will also consider a {\it generalized flag manifold} $G/K$ endowed with a complex structure $J$.

\subsection{Isotropy Representation}  

From the notion of {\it isotropy representation}, we can determine invariant metrics on certain homogeneous spaces as follows.
 
Let $G\times M\longrightarrow M$ be a differentiable and transitive action of a Lie group $G$ on the homogeneous space $(M,m)$ endowed with a $G$-invariant metric $m$. Given $x\in G$, let $K=G_x$ be the isotropy subgroup of $x$. The {\it isotropy representation} of $K$ is the homomorphism $g\in K\mapsto dg_x \in \textrm{Gl}(T_xM)$. Note that $m_{x_0}$ is a inner product on $T_{x_0}(G/K)$, invariant by such representation, where $x_0=1\cdot K$.

A homogeneous space $G/K$ is {\it reducible} if $G$ has a Lie algebra $\lie$ such that $$\lie=\mathfrak{k}\oplus\mathfrak{m},$$ with $\Ad(K)\mathfrak{m}\subset \mathfrak{m}$ and $\mathfrak{k}$ Lie algebra of $K$. If $K$ is compact, this decomposition always exists. Indeed, if we take $\mathfrak{m}=\mathfrak{k}^{\bot}$, $-B$-orthogonal complement to $\mathfrak{k}$ in $\lie$, where $B$ is the Cartan-Killing form of $\lie$.

If $G/K$ is reducible the isotropy representation of $K$ is equivalent to $\Ad |_K$, the restriction of the adjoint representation of $G$ to $K$, that is,  $$j(k)=\Ad(k)|_{\mathfrak{m}}, \forall k\in K.$$

A representation of a compact Lie group $K$ is always orthogonal (preserves inner product) on the representation space. We can conclude that every reductive homogeneous space $G/K$ has a $G$-invariant metric since such a metric is completely determined by an inner product on the tangent space at the origin $T_{x_0}(G/K)$. In fact, 
for each $k\in K$, if

\begin{itemize}
	\item $\mathcal{P}: \mathfrak{m}\longrightarrow T_{x_0}(G/K), X \mapsto \mathcal{P}(X)=\widetilde{X}(x_0)=\frac{d}{dt}(\exp(tX))x_0|_{t=0},$
	\item $\Ad(k): \mathfrak{m}\longrightarrow \mathfrak{m}, \Ad(k)X=d(C_k)_1X, (C_k:K\longrightarrow K, C_k(h)=khk^{-1})$
	\item $j(k): T_{x_0}(G/K)\longrightarrow T_{x_0}(G/K), j(k)\widetilde{X}=dk_{x_0}\widetilde{X},$

\end{itemize}
 we have that the following diagram commutes
  $$\xymatrix{\mathfrak{m}\ar[r]^{\Ad(k)}\ar[d]_{\mathcal{P}} & \mathfrak{m} \ar[d]^{\mathcal{P}}\\
              T_{x_0}(G/K)\ar[r]_{j(k)} & T_{x_0}(G/K) \\}$$
being $\mathcal{P}$ a linear isomorphism.

We remark that the set of all $G$-invariant metrics on $G/K$ are in 1-1 correspondence the set of inner products $\left\langle, \right\rangle$ on $\mathfrak{m}$, invariant by $\Ad(k)$ on $\mathfrak{m}$, for each $k\in K$, that is, $$\left\langle \Ad(k)X, \Ad(k)Y \right\rangle=\left\langle X, Y \right\rangle, \forall X,Y\in\mathfrak{m}, k\in K.$$

In the case of generalized flag manifolds the isotropy representation of $K$ leaves $\m$ invariant, i.e., $\Ad(K)\mathfrak{m}\subset \mathfrak{m}$ and decomposes it into irreducible submodules $$\m=\m_1\oplus\m_2\ldots\oplus\m_n,$$ and these submodules are inequivalent to each other. The submodules $\m_i$ are called {\it isotropy summands}. 

It follows that a $G$-invariant metric $g$ on $G/K$ is represented by a inner product $$g_{1\cdot K}=t_1Q|_{\m_1}+t_2Q|_{\m_2}+\ldots +t_nQ|_{\m_n}$$on $\m$, with $t_i$ positive constants and $Q$ is (the extension of) a inner product on $\m$, $\Ad(K)$-invariant. 

In particular, if $Q=(-B)$, with $B$ Cartan-Killing form of $G$ and $t_i=1$ for all $1\leq i\leq n$ above, the $G$-invariant metric $g$ on $G/K$ represented by the inner product $$g_{1\cdot K}=(-B)|_{\m_1}+(-B)|_{\m_2}+\ldots +(-B)|_{\m_n}$$on $\m$ is called {\it normal metric}.

\begin{remark} Let $M=G/T$ be a maximal flag manifold of a compact simple Lie group $G$. Then, the isotropy representation of $M$ decomposes into a discrete sum of $2$-dimensional pairwise non-equivalent irreducible $T$-submodules $\m_{\alpha}$ as follows: $$\m=\sum_{\alpha\in R^+}\m_{\alpha}.$$The number of these submodules is equal to the cardinality of $R^+$, the set of positive roots of $\mathfrak{g}$.
\end{remark} 

\begin{table}[htb]
\caption{The number of isotropy summands for maximal flag manifolds $G/T$}
\centering
\begin{tabular}{lcl}
\hline
Maximal flag manifold $G/T$ & Number of roots $|R|$ &$\m=\oplus_{j=1}^l$\\ 
\hline 
\vspace{0.3cm}
$SU(n+1)/T^{n},n\geq 1$&$n(n+1)$ & $l=n(n+1)/2$\\[1pt] \vspace{0.3cm}
$SO(2n+1)/T^n,n\geq 2$&$2n^2$ & $l=n^2$\\ \vspace{0.3cm} 
$Sp(n)/T^n,n\geq 3$&$2n^2$ & $l=n^2$\\ \vspace{0.3cm}
$SO(2n)/T^n,n\geq 4$&$2n(n-1)$ & $l=n(n-1)$\\ \vspace{0.3cm} 
$G_2/T$&$12$ & $l=6$\\ \vspace{0.3cm}
\end{tabular}
\end{table}
\subsection*{Flag Manifolds with 2 Isotropic Summands}

When the isotropy representation decomposes into a sum of two pairwise non-equivalent irreducible submodules $\m=\m_1\oplus\m_2$, we have the following.

In this case, $\Sigma-\Theta=\{\alpha_{i_0}\}$, where $\alpha_{i_0}$ is a simple root of height 2.

\begin{theorem}\label{2sum}\emph{(\cite{Arvanit2sum})} Let $G/K$ be a flag manifold such that $\m$ has 2 isotropic summands. Then, $G/K$ is locally isomorphic to one of the following spaces:

\begin{enumerate}

\item $E_6/SU(6)\times U(1)$;
\item $E_6/SU(2)\times SU(5)\times U(1)$;
\item $E_7/SU(7)\times U(1)$;
\item $E_7/SU(2)\times SO(10)\times U(1)$;
\item $E_7/SO(12)\times U(1)$;
\item $E_8/E_7\times U(1)$;
\item $E_8/SO(14)\times U(1)$;
\item $F_4/Sp(3)\times U(1)$;
\item $F_4/SO(7)\times U(1)$;
\item $G_2/U(2)$; ($U(2)$ represented by the short root of $G_2$)
\item $Sp(l)/U(i)\times Sp(l-i) $; ($l>0, i\neq l$)
\item $SO(2l)/U(i)\times SO(2(l-i))$; ($l>0, i\neq 1, i\neq l$)
\item $SO(2l+1)/U(i)\times SO(2(l-i)+1)$; ($l>0, i\neq 1$)
\end{enumerate}

\end{theorem}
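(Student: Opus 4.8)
The plan is to translate the hypothesis ``$\m$ has exactly two isotropy summands'' into a purely combinatorial condition on the marks of the highest root of $\lie$, and then enumerate. Working up to local isomorphism, one first reduces to $\lie$ simple: $G/K$ splits as a product according to the simple ideals of $\lie$, each nontrivial factor carrying at least one summand, so either a single ideal contributes both (the case treated below) or two ideals each contribute one, in which case $G/K$ is a product of two irreducible Hermitian symmetric spaces. Assuming then $\lie$ simple, write $G/K=\mathbb{F}_{\Theta}$ for a proper subset $\Theta\subsetneq\Sigma$, so that $\mathfrak{k}_{ss}$ has Dynkin diagram $\Theta$ and $\dim\mathfrak{z}(\mathfrak{k})=|\Sigma\setminus\Theta|$.

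Next I would use the standard description of the isotropy decomposition. Writing each complementary root $\beta\in\Pi_M$ uniquely as $\beta=\sum_{\alpha_i\in\Sigma}c_i(\beta)\alpha_i$, declare $\beta\sim\beta'$ iff $\beta-\beta'\in\langle\Theta\rangle$, equivalently $c_i(\beta)=c_i(\beta')$ for every $\alpha_i\in\Sigma\setminus\Theta$; the classes meeting $\Pi_M^+$ (the ``$t$-roots'') index the isotropy summands, each such class giving an irreducible $\Ad(K)$-submodule and distinct classes giving inequivalent ones. This is classical and I would cite it rather than reprove it. Hence the number of summands equals the number of distinct tuples $\big(c_i(\beta)\big)_{\alpha_i\in\Sigma\setminus\Theta}$ as $\beta$ ranges over $\Pi_M^+$.

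The combinatorial core is then immediate. If $|\Sigma\setminus\Theta|\geq 2$, pick distinct $\alpha_i,\alpha_j\in\Sigma\setminus\Theta$: the complementary roots $\alpha_i$, $\alpha_j$ and the highest root $\tilde\alpha=\sum_k m_k\alpha_k$ (with all $m_k\geq 1$ since $\lie$ is simple) yield three pairwise distinct tuples on $\{\alpha_i,\alpha_j\}$, namely $(1,0)$, $(0,1)$ and $(m_i,m_j)$, hence at least three summands --- a contradiction. So $\Sigma\setminus\Theta=\{\alpha_{i_0}\}$ is a single node, the relevant tuple is just the integer $c_{i_0}(\beta)$, and its set of values over $\Pi_M^+$ is exactly $\{1,2,\dots,m_{i_0}\}$ (the coefficients of a fixed simple root in positive roots fill the interval $\{0,1,\dots,m_{i_0}\}$). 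Therefore the number of summands is $m_{i_0}$, and ``two summands'' is equivalent to ``$\Theta=\Sigma\setminus\{\alpha_{i_0}\}$ with $\alpha_{i_0}$ of mark $2$ in the highest root of $\lie$'' --- exactly the criterion announced before the statement.

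It then remains to run through the simple types using their known marks. In type $A_n$ all marks equal $1$, so nothing occurs. In $B_n$ and $C_n$ every simple root but one end node has mark $2$, and in $D_n$ exactly the interior nodes do; deleting such an $\alpha_{i_0}$ identifies $K$, up to a central $U(1)$, as $U(i)\times SO(2(l-i)+1)$, $U(i)\times Sp(l-i)$, $U(i)\times SO(2(l-i))$ respectively, and the constraints on $i$ in items (11)--(13) reflect precisely which indices give mark $2$. For $G_2$, $F_4$, $E_6$, $E_7$, $E_8$ one lists the mark-$2$ simple roots --- one for $G_2$, two each for $F_4$, $E_7$, $E_8$, and two inequivalent ones for $E_6$ after factoring out the order-two diagram automorphism --- and reads off the semisimple part of $K$ from the residual subdiagram $\Theta$ ($A_1$ for $G_2$; $C_3$ and $B_3$ for $F_4$; $A_5$ and $A_1\times A_4$ for $E_6$; $A_6$, $A_1\times D_5$, $D_6$ for $E_7$; $E_7$ and $D_7$ for $E_8$), adjoining the $U(1)=\mathfrak{z}(\mathfrak{k})$; this produces items (1)--(10). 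The genuine input is the structural fact recalled in the second paragraph; everything after it is a finite, routine check of marks and subdiagrams, which I expect to be the most laborious step.
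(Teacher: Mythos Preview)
The paper does not give its own proof of this theorem: it is stated with a citation to \cite{Arvanit2sum} and used as input for the subsequent extremality discussion, so there is nothing in the paper to compare your argument against line by line.

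That said, your outline is the standard and correct route to this classification. The key reduction---that two isotropy summands forces $\Sigma\setminus\Theta=\{\alpha_{i_0}\}$ with $m_{i_0}=2$ in the highest root---is exactly how such results are obtained, and the paper itself remarks just before the statement that ``$\Sigma-\Theta=\{\alpha_{i_0}\}$, where $\alpha_{i_0}$ is a simple root of height $2$''. Your use of the $T$-root decomposition to count summands matches the description the paper gives later (around formula \eqref{formula}) when it sets up the extremality criterion.

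Two small points. First, a slip in your exceptional enumeration: you write ``two each for $F_4$, $E_7$, $E_8$'', but $E_7$ has \emph{three} simple roots of mark $2$, as your own list of residual subdiagrams ($A_6$, $A_1\times D_5$, $D_6$, i.e.\ items (3)--(5)) correctly reflects. Second, your opening reduction to $\lie$ simple is a genuine observation: if $\lie$ is only semisimple, products of two irreducible Hermitian symmetric spaces also have exactly two summands and are not on the list. The cited classification (and the paper's use of it) tacitly takes $G$ simple, so this is a caveat about the statement rather than a gap in your proof.
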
	

\subsection*{Flag Manifolds with 3 Isotropic Summands}

When the isotropy representation decomposes into a sum of three pairwise non-equivalent irreducible submodules $\m=\m_1\oplus\m_2\oplus\m_3$, we have the following possibilities for the set $\Sigma-\Theta$ (\cite{Kimura}):

\begin{itemize} 
\item [Case 1:] $\Sigma-\Theta=\{\alpha_{i_0}\}$, where $\alpha_{i_0}$ is a simple root of height 3;
\item [Case 2:]$\Sigma-\Theta=\{\alpha_{i_0},\alpha_{j_0}\}$, where $\alpha_{i_0}$ and $\alpha_{j_0}$ are simple roots of height 1.   
\end{itemize}

\begin{theorem}\label{3sum}\emph{(\cite{Kimura})}Let $G/K$ be a flag manifold such that $\m$ has 3 isotropic summands. Then, $G/K$ is locally isomorphic to one of the following spaces:

\begin{enumerate}

\item $E_6/U(2)\times SU(3)\times SU(3)$;
\item $E_7/U(3)\times SU(5)$;
\item $E_7/U(2)\times SU(6)$;
\item $E_8/U(2)\times E_6$;
\item $E_8/U(8)$;
\item $F_4/U(2)\times SU(3)$;
\item $G_2/U(2)$;
\item $SU(l+m+n)/S(U(l)\times U(m)\times U(n))$;
\item $SO(2l)/U(1)\times U(l-1)$;
\item $E_6/U(1)\times U(1)\times SO(8)$;
\end{enumerate}
where $l,m,n > 0$ in \emph{(8)} and $l\geq 4$ in \emph{(9)}. The flag manifolds in items \emph{(1)} to \emph{(7)} fall into case 1, while the others fall into case 2.

\end{theorem}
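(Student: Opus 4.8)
The plan is to derive the classification from the painted Dynkin diagram description of $G/K$ together with a combinatorial count of the isotropy summands. Recall that when $\mathfrak{g}$ is simple a flag manifold $G/K$ is determined up to local isomorphism by the Dynkin diagram of $\mathfrak{g}$ and the subset $\Sigma\setminus\Theta$ of painted simple roots: the semisimple part of $\mathfrak{k}$ has Dynkin diagram $\Theta$ and $\dim\mathfrak{z}(\mathfrak{k})=|\Sigma\setminus\Theta|$. For $\beta\in\Pi_M^+$ write $\beta=\sum_{\alpha\in\Sigma}c_\alpha(\beta)\,\alpha$ and call $d(\beta)=(c_\alpha(\beta))_{\alpha\in\Sigma\setminus\Theta}$ its degree; one has $[\mathfrak{m}_{d},\mathfrak{m}_{d'}]\subset\mathfrak{m}_{d+d'}$ and the distinct degrees occurring index exactly the irreducible isotropy summands, so the number of isotropy summands equals $\#\{d(\beta):\beta\in\Pi_M^+\}$. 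The theorem thus reduces to classifying the pairs $(\mathfrak{g},\Theta)$, $\mathfrak{g}$ simple, for which this set has exactly three elements.

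First I would bound $r:=|\Sigma\setminus\Theta|$. The painted simple roots themselves contribute the standard basis vectors $e_1,\dots,e_r$ as degrees, and a short root-string argument shows that $r\ge 3$ already forces at least a fourth degree (in a connected diagram one always finds a complementary root supported, with coefficient $1$, on two painted nodes, producing some $e_i+e_j$; together with $e_1,e_2,e_3$ this is already four). Hence $r\in\{1,2\}$, which is precisely the dichotomy in the statement. If $r=1$, say $\Sigma\setminus\Theta=\{\alpha_{i_0}\}$, then $d(\beta)$ is just the coefficient of $\alpha_{i_0}$ in $\beta$, and by convexity of root strings these coefficients run through $1,2,\dots,k$, where $k$ is the coefficient of $\alpha_{i_0}$ in the highest root of $\mathfrak{g}$; so three summands is equivalent to $\alpha_{i_0}$ having coefficient $3$ in the highest root, i.e.\ Case~1. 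If $r=2$, the same convexity principle shows that if either painted node appeared with coefficient $\ge 2$ in the highest root then the highest root itself would contribute a large degree and one would obtain at least four summands; hence both painted nodes have coefficient $1$ in the highest root (Case~2), the only possible degrees are then among $e_1,e_2,e_1+e_2$, and one gets exactly three precisely when the mixed degree $e_1+e_2$ is also realised.

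It then remains to inspect each simple type. For Case~1 one lists the simple roots occurring with coefficient $3$ in the highest root; these exist only in $E_6$ (the branch node), $E_7$ (two nodes), $E_8$ (two nodes), $F_4$ (one node) and $G_2$ (its short simple root), giving seven possibilities, and for each the complementary diagram $\Theta=\Sigma\setminus\{\alpha_{i_0}\}$ together with $\dim\mathfrak{z}(\mathfrak{k})=1$ pins down $\mathfrak{k}$ — e.g.\ deleting the branch node from $E_6$ leaves $A_2\times A_2\times A_1$, so $K=U(2)\times SU(3)\times SU(3)$, and deleting the appropriate node of $E_8$ leaves $A_7$, so $K=U(8)$; the seven cases produce items (1)--(7). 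For Case~2 one runs through the unordered pairs of simple roots both having coefficient $1$ in the highest root and checks realisability of $e_1+e_2$: in type $A_n$ every pair qualifies and yields $SU(l+m+n)/S(U(l)\times U(m)\times U(n))$ for all $l,m,n>0$ (item (8)); in $D_l$ the admissible pairs are all equivalent under the diagram automorphisms and give $SO(2l)/U(1)\times U(l-1)$, $l\ge 4$ (item (9)); in $E_6$ the unique such pair of the two ends of the long arms gives $E_6/U(1)\times U(1)\times SO(8)$ (item (10)); and types $B,C,F_4,G_2,E_7,E_8$ contribute nothing, having at most one simple root of coefficient $1$ in the highest root.

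The main obstacle is not a single deep step but the bookkeeping needed to make the enumeration genuinely exhaustive: one must verify carefully that the bound $r\le 2$ and the height constraints of Cases~1 and~2 are really forced by the root-string convexity, and, having fixed each admissible $\Theta$, that the resulting $K$ is correctly matched with the groups in the list — in particular one has to keep track of diagram automorphisms (triality of $D_4$, the $\mathbb{Z}_2$ of $D_l$ and of $E_6$) so that no case is counted twice or dropped.
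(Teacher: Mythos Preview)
The paper does not contain a proof of this statement: Theorem~\ref{3sum} is quoted verbatim from Kimura \cite{Kimura} and used as input for the subsequent results, so there is nothing in the paper itself to compare your argument against.

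That said, your outline is precisely the standard $T$-root / painted Dynkin diagram method by which such classifications are obtained (and is, in essence, how Kimura proceeds): one identifies the irreducible isotropy summands with the positive $T$-roots, reduces the condition ``exactly three summands'' to the dichotomy $|\Sigma\setminus\Theta|=1$ with $\mathrm{ht}(\alpha_{i_0})=3$ (Case~1) or $|\Sigma\setminus\Theta|=2$ with both heights equal to~$1$ (Case~2), and then reads off the possibilities from the coefficients of the highest root in each simple type. Your treatment of Case~1 and of types $A$, $E_6$, and the exclusion of $B,C,E_7,E_8,F_4,G_2$ in Case~2 is correct.

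One point needs more care. In type $D_l$ with $l\ge 5$ the automorphism group of the Dynkin diagram is only $\mathbb{Z}_2$ (swapping $\alpha_{l-1}$ and $\alpha_l$), so the admissible pairs $\{\alpha_1,\alpha_{l-1}\}\sim\{\alpha_1,\alpha_l\}$ and $\{\alpha_{l-1},\alpha_l\}$ are \emph{not} identified by a diagram automorphism as you claim; triality only handles $l=4$. You therefore need an additional argument (or a direct check) that these two painted diagrams nevertheless yield locally isomorphic flag manifolds $SO(2l)/U(1)\times U(l-1)$, or else account for them separately. This is exactly the kind of bookkeeping you flag in your final paragraph, but as written your $D_l$ step asserts more than the diagram automorphisms actually give.
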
	

As for determining the irreducible submodules of $\m$, we have the following description (see \cite{Kimura}):

Case 1: $\Sigma-\Theta=\{\alpha_{i_0}\}, ht(\alpha_{i_0})=3$

Define $\Delta(k)=\{\Sigma_i^n m_i\alpha_i\in \Pi^+;m_{i_0}=k\}$, for each integer $1\leq k\leq 3$. The irreducible submodules of $\m$ have the form $$\m_k=\displaystyle\Sigma_{\alpha\in\Delta(k)}u_{\alpha}, u_{\alpha} =\text{span}_{\R}\{X_{\alpha}-X_{-\alpha}, i(X_{\alpha}+X_{-\alpha})\},$$
where $\{X_{\alpha}\}_{\alpha\in\Pi}$ is the Weyl basis of $\lie$, the Lie algebra of $G$.	

Case 2: $\Sigma-\Theta=\{\alpha_{i_0}, \alpha_{j_0}\}, ht(\alpha_{i_0})=ht(\alpha_{j_0})=1$ 

Define 

$$\left\{
 \begin{array}{ccc}
 \Delta(1,0)&=&\{\Sigma_i^n m_i\alpha_i\in \Pi^+;m_{i_0}=1 \ \ e \ \ m_{j_0}=0\}\\
 \Delta(0,1)&=&\{\Sigma_i^n m_i\alpha_i\in \Pi^+;m_{i_0}=0 \ \ e\ \ m_{j_0}=1\} \\
 \Delta(1,1)&=&\{\Sigma_i^n m_i\alpha_i\in \Pi^+;m_{i_0}=1 \ \ e\ \ m_{j_0}=1\}\\
  \end{array}
 \right..$$
In this case, 

$$\left\{
 \begin{array}{ccc}
 \m_1&=&\displaystyle\Sigma_{\alpha\in\Delta(1,0)}u_{\alpha}\\
 \m_2&=&\displaystyle\Sigma_{\alpha\in\Delta(0,1)}u_{\alpha}\\ 
 \m_3&=&\displaystyle\Sigma_{\alpha\in\Delta(1,1)}u_{\alpha}\\
  \end{array}
 \right..$$

\subsection*{Flag Manifolds with 4 Isotropic Summands}

When the isotropy representation decomposes into a sum of four pairwise non-equivalent irreducible submodules $\m=\m_1\oplus\m_2\oplus\m_3\oplus\m_4$, we have (\cite{Arvanit4sum}):

\begin{itemize} 
\item [Case 1:] $\Sigma-\Theta=\{\alpha_{i_0}\}$, where $\alpha_{i_0}$ is a simple root of height 4;
\item [Case 2:]$\Sigma-\Theta=\{\alpha_{i_0},\alpha_{j_0}\}$, where $\alpha_{i_0}$ and $\alpha_{j_0}$ are simple roots of height 1 and 2, respectively.   
\end{itemize} 

\begin{theorem}\label{4sum} \emph{(\cite{Arvanit4sum})} Let $G/K$ be a flag manifold such that $\m$ has 4 isotropic summands. Then, $G/K$ is locally isomorphic to one of the following spaces:

\begin{enumerate}

\item $F_4/U(2)\times SU(3)\times SU(2)\times U(1)$;
\item $E_7/SU(4)\times SU(3)\times SU(2)\times U(1)$;
\item $E_8/SO(10)\times SU(3)\times U(1)$;
\item $E_8/SU(7)\times SU(2)\times U(1)$;
\item $SO(2l+1)/SO(2l-3)\times U(1)\times U(1)$;
\item $Sp(l)/U(1)\times U(p)\times U(l-p), \ \ 1\leq p\leq l-1$;
\item $SO(2l)/SO(2l-4)\times U(1)\times U(1)$;
\item $SO(2l)/U(p)\times U(l-p), \ \ 1\leq p\leq l-2$;
\item $E_6/SU(5)\times U(1)\times U(1)$;
\item $E_7/SO(10)\times U(1)\times U(1)$.
\end{enumerate}
The flag manifolds in items \emph{(1)} to \emph{(4)} fall into case 1, while the others fall into case 2.

\end{theorem}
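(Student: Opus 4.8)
The plan is to reduce the statement to a finite combinatorial verification on painted Dynkin diagrams. Recall from the description of the irreducible isotropy submodules given above (the sets $\Delta(k)$ in Case~1 and $\Delta(k_1,k_2)$ in Case~2) that, if we write $\Sigma\setminus\Theta=\{\alpha_{i_1},\dots,\alpha_{i_r}\}$, the isotropy summands of $\mathbb F_\Theta=G/K_\Theta$ are in bijection with the distinct tuples $(m_{i_1},\dots,m_{i_r})$ that occur as the coordinates, along the white nodes, of the positive complementary roots $\beta=\sum_i m_i\alpha_i\in\Pi_M^+$. Thus ``$\m$ has exactly $4$ isotropy summands'' becomes the purely root-theoretic requirement that this set of tuples have cardinality $4$.

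The first step is to bound $r=|\Sigma\setminus\Theta|$. Since the highest root $\tilde\alpha$ dominates every positive root, one has $m_{i_j}(\beta)\le m_{i_j}(\tilde\alpha)=:c_j$ for all $\beta\in\Pi^+$, all intermediate values $0,1,\dots,c_j$ are attained, each $\alpha_{i_j}$ is itself complementary and contributes the tuple $e_j$, and $\tilde\alpha$ contributes $(c_1,\dots,c_r)$ with every $c_j\ge 1$; moreover for any two white nodes there is a complementary root supported on a path joining them in the diagram, which (when that path avoids the remaining white nodes) contributes a tuple with exactly those two coordinates nonzero. Putting these facts together one checks that $r\ge 3$ already forces at least $5$ tuples, that $r=1$ forces the tuple set to be $\{1,\dots,c_1\}$ — so $4$ summands is equivalent to $c_1=4$ — and that $r=2$ forces $\{c_1,c_2\}=\{1,2\}$ (the alternatives $\{1,1\},\{2,2\},\{1,3\},\dots$ giving $3$ tuples or more than $4$). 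This is exactly the Case~1/Case~2 dichotomy of the statement.

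The second step runs the two cases through the list of simple Lie algebras. In Case~1 one reads off the marks of the highest root: $A_n$ (all marks $1$), $B_n,C_n,D_n$ (marks $\le 2$), and $G_2,E_6$ (marks $\le 3$) contribute nothing, while $E_7$ has a unique node of mark $4$, $E_8$ exactly two, and $F_4$ a unique one; deleting the chosen node from $\Sigma$ produces $\Theta$, whose connected components give the semisimple part of $K_\Theta$ (a central circle being adjoined since $r=1$), and one obtains the four spaces (1)--(4). In Case~2 one scans within each type the pairs formed by a node of mark $1$ and a node of mark $2$: for the classical families $B_l,C_l,D_l$ one writes the complementary roots in the usual $\varepsilon_i$-coordinates and counts the distinct coordinate pairs directly, verifying that exactly one of the five a priori possible pairs is missing for every rank $l$, which yields the families (5)--(8); for $E_6$ and $E_7$ one inspects the finitely many admissible painted diagrams and retains those with exactly $4$ pairs, giving (9) and (10). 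In every case $K$ is identified, up to local isomorphism, as the product of the groups attached to the components of $\Theta$ with a torus $U(1)^r$.

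The step I expect to be the main obstacle is the counting in Case~2: for each candidate pair of white nodes one must show that the set of realized coordinate pairs has cardinality exactly $4$ — neither $3$ nor $5$ — and this depends sensitively on the position of the two white nodes relative to the branch node and the multiple edge of the diagram, with the classical families requiring a single argument uniform in the rank. A secondary bookkeeping difficulty is naming $K$ correctly up to local isomorphism when the two deleted nodes are not interchanged by a diagram symmetry (for instance distinguishing the two mark-$4$ nodes of $E_8$, which give the genuinely different quotients by $SO(10)\times SU(3)\times U(1)$ and by $SU(7)\times SU(2)\times U(1)$); once the tuple counts are in hand this reduces to routine Dynkin-diagram combinatorics.
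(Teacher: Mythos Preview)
The paper does not prove this theorem: it is quoted, with attribution to Arvanitoyeorgos and Chrysikos \cite{Arvanit4sum}, as background input for the later $\lambda_1^{IK}$-extremality computations, and no argument is supplied. There is therefore nothing in the present paper to compare your proposal against.

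For what it is worth, your outline is precisely the standard route to such classifications and is the method used in the cited source: one identifies irreducible isotropy summands with positive $T$-roots, equivalently with the distinct coordinate tuples $(m_{i_1},\dots,m_{i_r})$ of positive complementary roots along $\Sigma\setminus\Theta$; one bounds $r=|\Sigma\setminus\Theta|$ by showing that $r\ge 3$ already forces at least five tuples; and one then runs the two surviving possibilities through the list of simple types using the marks of the highest root. Your identification of Case~1 (a single white node of mark~$4$, occurring once in $F_4$, once in $E_7$, twice in $E_8$) and of Case~2 (two white nodes of marks~$1$ and~$2$) is correct. Your own caveat about Case~2 is exactly the point that needs care: having marks $\{1,2\}$ is necessary but not sufficient, since for many such pairs all five a~priori tuples $(1,0),(0,1),(0,2),(1,1),(1,2)$ are realised and one gets five summands rather than four; the scan must therefore discard those pairs, not merely verify the four-summand cases. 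With that refinement the sketch is sound.
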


We have the following description (see \cite{Arvanit4sum}) for irreducible submodules of $\m$:

Case 1: $\Sigma-\Theta=\{\alpha_{i_0}\}, ht(\alpha_{i_0})=4$

Define $\Delta(k)=\{\Sigma_i^n m_i\alpha_i\in \Pi^+;m_{i_0}=k\}$, for each integer $1\leq k\leq 4$. The irreducible submodules of $\m$ have the form $$\m_k=\displaystyle\Sigma_{\alpha\in\Delta(k)}u_{\alpha}, u_{\alpha} =\text{span}_{\R}\{X_{\alpha}-X_{-\alpha}, i(X_{\alpha}+X_{-\alpha})\},$$
where $\{X_{\alpha}\}_{\alpha\in\Pi}$ is the Weyl basis of $\lie$, the Lie algebra of $G$.	

Case 2: $\Sigma-\Theta=\{\alpha_{i_0}, \alpha_{j_0}\}, ht(\alpha_{i_0})=1, ht(\alpha_{j_0})=2$ 

Define

$$\left\{
 \begin{array}{ccc}
 \Delta(1,0)&=&\{\Sigma_i^n m_i\alpha_i\in \Pi^+;m_{i_0}=1 \ \ e \ \ m_{j_0}=0\}\\
 \Delta(0,1)&=&\{\Sigma_i^n m_i\alpha_i\in \Pi^+;m_{i_0}=0 \ \ e\ \ m_{j_0}=1\} \\
 \Delta(1,1)&=&\{\Sigma_i^n m_i\alpha_i\in \Pi^+;m_{i_0}=1 \ \ e\ \ m_{j_0}=1\}\\
 \Delta(1,2)&=&\{\Sigma_i^n m_i\alpha_i\in \Pi^+;m_{i_0}=1 \ \ e\ \ m_{j_0}=2\}\\
  \end{array}
 \right..$$
Hence, 

$$\left\{
 \begin{array}{ccc}
 \m_1&=&\displaystyle\Sigma_{\alpha\in\Delta(1,0)}u_{\alpha}\\
 \m_2&=&\displaystyle\Sigma_{\alpha\in\Delta(0,1)}u_{\alpha}\\ 
 \m_3&=&\displaystyle\Sigma_{\alpha\in\Delta(1,1)}u_{\alpha}\\
 \m_4&=&\displaystyle\Sigma_{\alpha\in\Delta(1,2)}u_{\alpha}\\ 
 
  \end{array} 
 \right..$$
\section{Lie Theoretic Approach for the Extremality Condition}


The set of invariant Kähler metrics on $M=G/K$ (with respect to $J$) with the same volume of the Kähler-Einstein metric $\bar{g}$ can be nicely parametrized in the open {\it Weyl chamber} whose construction we will introduce below.

Let $B$ be the Cartan-Killing of the Lie algebra $\lie$ of $G$ and $\mathfrak{k}$ the Lie algebra of $K$. Take a $B$-orthogonal decomposition $\lie=\mathfrak{k}\oplus\mathfrak{m}$, where $\mathfrak{m}$ is the isotropy representation of $K$. Fixed a Cartan subalgebra $\mathfrak{h}$ of $\lie$, denote by $\Pi$ the root system of $\lie^{\mathbb{C}}$ with respect to $\mathfrak{h}^{\mathbb{C}}$ and by $\Sigma$ the system of simple roots.

Hence, 

$$\mathfrak{k}^{\mathbb{C}}=\mathfrak{h}^{\mathbb{C}}\oplus\displaystyle\bigoplus_{\alpha\in \Pi_{\mathfrak{h}}}\lie_{\alpha}, \ \ \mathfrak{m}^{\mathbb{C}}=\displaystyle\bigoplus_{\alpha\in\Pi_{\mathfrak{m}}}\lie_{\alpha},$$
where $\Pi_{\mathfrak{h}}=\left\langle \Theta\right\rangle\cap\Pi$, with $\Theta\subset\Sigma$ and $\Pi_{\mathfrak{m}}=\Pi\backslash \Pi_{\mathfrak{h}}$.  

Let $\mathfrak{c}\subseteq\mathfrak{h}$ be the center of $\mathfrak{k}$. The roots in $\Pi_{\mathfrak{h}}$ are characterized by the fact that they vanish on $\mathfrak{c}$. In our context, we have a relationship between the second Betti number of $G/K$ and $\mathfrak{c}$, namely, $b_2(G/K)=\dim\mathfrak{c}$. 

\begin{remark} We must consider the following (see \cite{aleksev}):

\begin{flushleft}

1) The $G$-invariant complex structures are in bijective correspondence with the invariant orderings of $\Pi_{\mathfrak{m}}$ subsets $\Pi_{\mathfrak{m}}^+\subset\Pi_{\mathfrak{m}}$ such that $\Pi_{\mathfrak{m}}$ is the disjoint union $\Pi_{\mathfrak{m}}=\Pi_{\mathfrak{m}}^+\cup (-\Pi_{\mathfrak{m}}^+)$ and $$(\Pi_{\mathfrak{h}}+\Pi_{\mathfrak{m}}^+)\cap\Pi\subset \Pi_{\mathfrak{m}}^+, \, (\Pi_{\mathfrak{m}}^++\Pi_{\mathfrak{m}}^+)\cap\Pi\subset \Pi_{\mathfrak{m}}^+. $$
\end{flushleft}

\begin{flushleft}

2) Invariant orderings are then in one-to-one correspondence with {\it Weyl chambers} in the center $\mathfrak{c}$, that is, connected components of the set $\mathfrak{c}\backslash\bigcup_{\alpha\in \Pi_{\mathfrak{m}} } \ker (\alpha|_{\mathfrak{c}})$.

\end{flushleft}

\begin{flushleft}

3) Fixed an invariant complex structure $J$ on $M=G/K$ and, therefore, a Weyl chamber in $\mathfrak{c}$, it is possible to parametrize the set of $G$-invariant Kähler metrics which are Hermitian w.r.t. $J$.
\end{flushleft}

\begin{flushleft}

4) The $G$-invariant symplectic structures, i.e., $G$-invariant non-degenerate closed 2-forms, are in one-to-one correspondence with the elements in the Weyl chamber in $\mathfrak{c}$.
\end{flushleft}

Indeed, if $\omega\in\Lambda^2(\mathfrak{m})$ is a symplectic form, there exists  $\xi$ in some Weyl chamber $C$ in $\mathfrak{c}$ such that $$\omega(X,Y)=B(\text{ad}_{\xi}X,Y), \, X,Y\in\mathfrak{m}.$$

In addition, $\omega$ above defined is the Kähler form of a Kähler metric $\bar{g}$ w.r.t. the complex structure $J$, that is, $\bar{g}:=\omega(\cdot,J\cdot)$ defines a Kähler metric if and only if $\xi\in C$.

\end{remark}

Let us consider the functional $$\delta_{\m}=\displaystyle\sum_{\alpha\in \Pi^+_{\m}}\alpha.$$ Its $B$-dual lies in $i\mathfrak{c}$ while $\eta_{\mathfrak{m}}:=-i\hat{\delta}_{\mathfrak{m}}\in C$, where $\hat{\delta}$ is the $B$-dual of $\delta_{\m}$ and $C$ is the fixed Weyl chamber. 

The element $\eta_{\mathfrak{m}}\in C$ defines an invariant Kähler metric, which is the unique invariant Kähler-Einstein metric $\bar{g}$ such that $Ric(\bar{g})=\bar{g}$ (see \cite{bordemann}).

We are interested in the functional $\lambda_1^{IK}:\mathcal{K}_0\longrightarrow \mathbb{R}$, the first eigenvalue, defined on the hypersurface $\mathcal{K}_0\subset C$ given by those points in the Weyl chamber $C$ which corresponds to invariant Kähler metrics on the partial flag manifolds with two, three and four isotropic summands above mentioned, with the same volume as the Kähler-Einstein metric $\bar{g}.$ We will apply a criterion based on $\delta_{\m}$ in order to determine when $\bar{g}$ is $\lambda_1^{IK}$-extremal on such manifolds.

\begin{definition} The Kähler-Einstein metric $\bar{g}$ is \emph{$\lambda_1^{IK}$-extremal} if for each analytic curve $g_t$ in $\mathcal{K}$, $t\in (-\epsilon,\epsilon)$, with $g_0=\bar{g}$, we have $$\frac{d}{dt}|_{t=0^+}\lambda_1(g_t)\leq 0\leq \frac{d}{dt}|_{t=0^-}\lambda_1(g_t).$$ 
\end{definition}



\subsection{Main Results}\label{Section4}


For each analytic curve $g_t$, we define $h:=\frac{d}{dt}|_{t=0}g_t$, which is a $G$-invariant symmetric tensor. Hence, $\left\langle h,\bar{g}\right\rangle$ is constant over $M$. Since $\text{vol}(g_t)=\text{vol}(\bar{g})$, we have $\int_M\left\langle h,\bar{g}\right\rangle d\mu_{\bar{g}}=0$ and $\left\langle h,\bar{g}\right\rangle=0$.

We can identify the set of all symmetric tensors tangent to $g_t$ with the Lie subalgebra $\mathfrak{c}$ and those which correspond to variations such that $\textrm{vol}(g_t)=\textrm{vol}(\bar{g})$, with the hyperplane \linebreak $Y:=\{h\in \mathfrak{c};  \left\langle h,\bar{g}\right\rangle=0\}=\textrm{Ker}(\textrm{Tr})$, where $\textrm{Tr}(h)=\left\langle h,\bar{g}\right\rangle$.

We will take over the fact that if $\bar{g}$ is the Kähler-Einstein metric on $M$, then $\lambda_1(\bar{g})=2$ and the associated eigenspace $E_1$ is isomorphic to the space of Killing fields on $M$ $(E_1\ni u \mapsto J(\textrm{grad}u))$ (\cite{Kobayashi}).

Let $G$ be a compact simple Lie group. from the discussion above, the $G$-invariant inner product, induced by the space $L^2(\bar{g})$ on $E_1$ is given by $r\cdot B$, for some positive constant $r$, so that the base $u_1,\ldots,u_m$ de $E_1$ is $B$-orthogonal too.  

We can also conclude that for each $h\in\mathfrak{c}$ the quadratic form $Q_h$ on $E_1$, given by $$Q_h(u)=\int_M \left\langle du\otimes du + \frac{1}{4}\Delta_{\bar{g}}u^2\cdot \bar{g},h\right\rangle d\mu_{\bar{g}},$$ reduces to
$$Q_h(u)=\int_M \left\langle du\otimes du,h\right\rangle d\mu_{\bar{g}}.$$

Therefore, $Q_h$ is $G$-invariant and there is a functional $\phi\in\mathfrak{c}^{\ast}$ such that $$Q_h(u,u)=\phi(h)\cdot B(u,u).$$ It follows the criterion bellow.

\begin{lemma} \label{trace} \emph{(\cite{Panelli})} Let $G$ be a compact simple Lie group, which coincides locally with the isometry group of $(M,\bar{g})$. Then, the Kähler-Einstein metric $\bar{g}$ is \emph{$\lambda_1^{IK}$-extremal} if and only if there is a constant $c\in \R$ such that \emph{$\phi=c\cdot\textrm{Tr}$}. 
\end{lemma}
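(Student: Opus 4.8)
The plan is to translate the defining inequality for $\lambda_1^{IK}$-extremality, through Berger's formulas recalled above, into the single linear condition that $\phi$ vanish on $\ker(\mathrm{Tr})$, and then obtain $\phi=c\cdot\mathrm{Tr}$ by elementary linear algebra. Concretely, I would fix an analytic curve $g_t$ in $\mathcal{K}_0$ with $g_0=\bar g$ and set $h:=\frac{d}{dt}|_{t=0}g_t\in\mathfrak{c}$; Berger's two lemmas then supply an $L^2(\bar g)$-orthogonal basis $u_1,\dots,u_m$ of the eigenspace $E_1$ of $\lambda_1(\bar g)=2$ with
$$\frac{d}{dt}\Big|_{t=0^+}\lambda_1(g_t)=-\max_{1\le j\le m}Q_h(u_j),\qquad \frac{d}{dt}\Big|_{t=0^-}\lambda_1(g_t)=-\min_{1\le j\le m}Q_h(u_j),$$
where $Q_h(u)=\int_M\langle du\otimes du,h\rangle\,d\mu_{\bar g}$ (the term $\tfrac14\Delta_{\bar g}u^2\cdot\bar g$ of $q(u)$ contributes nothing, since $\langle h,\bar g\rangle$ is constant on $M$ and $\int_M\Delta_{\bar g}u^2\,d\mu_{\bar g}=0$). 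Thus $\bar g$ is $\lambda_1^{IK}$-extremal precisely when
$$\min_{1\le j\le m}Q_h(u_j)\le 0\le\max_{1\le j\le m}Q_h(u_j)$$
holds for every such curve. I would also record that the admissible tangent vectors $h$ range over exactly $Y=\ker(\mathrm{Tr})$: every analytic curve in $\mathcal{K}_0$ has $\langle h,\bar g\rangle=0$, and conversely, $\mathcal{K}_0$ being a real-analytic hypersurface through $\bar g$ with tangent space $Y$, each $h\in Y$ is the initial velocity of an analytic curve in $\mathcal{K}_0$.

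Next I would use simplicity of $\lie$ together with the hypothesis that $\lie$ coincides locally with the isometry algebra of $(M,\bar g)$: the latter, via the identification $E_1\cong\{\text{Killing fields on }M\}\cong\lie$ given by $u\mapsto J(\mathrm{grad}\,u)$, realizes $E_1$ as the irreducible adjoint module of $\lie$, so for each fixed $h$ the $G$-invariant symmetric bilinear form $Q_h$ on $E_1$ is a multiple of the Killing form, $Q_h(u,u)=\phi(h)\cdot B(u,u)$ for $u\in E_1$, and $\phi\in\mathfrak{c}^\ast$ because $Q_h$ depends linearly on $h$. Since $B$ is definite on the compact Lie algebra $\lie$, the scalars $B(u_j,u_j)$ are nonzero and of one common sign, so the numbers $Q_h(u_j)=\phi(h)\,B(u_j,u_j)$ all carry the sign of $\phi(h)$ and all vanish if and only if $\phi(h)=0$. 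Hence the two-sided inequality of the first step holds if and only if $\phi(h)=0$; in particular it does not depend on which orthogonal basis of $E_1$ Berger's construction produced along the given curve.

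Combining the two steps, $\bar g$ is $\lambda_1^{IK}$-extremal if and only if $\phi(h)=0$ for all $h\in Y=\ker(\mathrm{Tr})$, i.e.\ $\ker(\mathrm{Tr})\subseteq\ker(\phi)$; since $\mathrm{Tr}$ is a nonzero functional on $\mathfrak{c}$ (so that $Y$ is a genuine hyperplane), any functional vanishing on $\ker(\mathrm{Tr})$ must be a scalar multiple of $\mathrm{Tr}$, giving $c\in\R$ with $\phi=c\cdot\mathrm{Tr}$; conversely $\phi=c\cdot\mathrm{Tr}$ kills $Y$ and so makes $\bar g$ extremal. The step I expect to be the real obstacle is the sign dichotomy of the second paragraph: it is what collapses the $\min$/$\max$ inequality to the bare equation $\phi(h)=0$, and it genuinely needs both that $\lie$ is simple — forcing $Q_h$ to be proportional to $B$, so the $Q_h(u_j)$ cannot have mixed signs — and that $B$ is definite on the compact real form, so that the $Q_h(u_j)$ cannot straddle $0$ unless they all vanish. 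The remaining points — dropping the $\bar g$-term from $q(u)$, identifying admissible variations with $Y$, and the final linear-algebra step — are routine given the material already in place.
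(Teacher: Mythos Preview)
Your argument is correct and follows precisely the line the paper sketches in the paragraphs preceding the lemma: reduce $q(u)$ to $du\otimes du$ using $\langle h,\bar g\rangle=0$, identify the admissible variations with $Y=\ker(\mathrm{Tr})\subset\mathfrak{c}$, invoke $E_1\cong\lie$ and simplicity to get $Q_h=\phi(h)\cdot B$, and then conclude by linear algebra. The only substantive step you add beyond the paper's outline is the explicit sign-dichotomy observation (definiteness of $B$ forces all $Q_h(u_j)$ to share the sign of $\phi(h)$, collapsing the $\min/\max$ inequality to $\phi(h)=0$); the paper leaves this implicit and simply cites \cite{Panelli} for the lemma, so your write-up is in fact more complete than what appears here.
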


\begin{remark} The number of simple roots in $\Sigma-\Theta$, the set of complementary roots, determines the dimension of $\mathfrak{c}\subset\mathfrak{h}\subset\mathfrak{k}$ defined above. From \ref{trace}, if $\textrm{dim}\mathfrak{c}=1$, the Kähler-Einstein metric on the associated flag manifold $G/K$ must be $\lambda_1^{IK}-$extremal.
\end{remark}

We can now enunciate in the following theorem the first classification result of $\lambda_1^{IK}-$extremality for partial flag manifolds in this work. Its proof is an immediate consequence of \ref{trace} as well as the above remark.

\begin{theorem}
The Kähler-Einstein metrics on partial flag manifolds whose the set $\Sigma-\Theta$ of complementary roots contains precisely one simple root are $\lambda_1^{IK}$-extremal.
\end{theorem}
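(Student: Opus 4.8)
The plan is to deduce the statement directly from Lemma~\ref{trace} together with the identification $\dim\mathfrak{c}=b_2(G/K)=|\Sigma-\Theta|$ recorded above. First I would observe that, by hypothesis, $\Sigma-\Theta$ consists of a single simple root, so $\mathfrak{c}$ is one-dimensional; consequently its dual $\mathfrak{c}^{\ast}$ is also one-dimensional.

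Next I would recall that both the functional $\phi$ produced by the $G$-invariant quadratic form $Q_h$ and the trace functional $\mathrm{Tr}$ are elements of $\mathfrak{c}^{\ast}$. Since $\bar g\in\mathfrak{c}$ and $\mathrm{Tr}(\bar g)=\langle\bar g,\bar g\rangle>0$, the functional $\mathrm{Tr}$ is nonzero, hence it spans the one-dimensional space $\mathfrak{c}^{\ast}$. Therefore there exists $c\in\R$ with $\phi=c\cdot\mathrm{Tr}$, and Lemma~\ref{trace} yields that $\bar g$ is $\lambda_1^{IK}$-extremal.

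Equivalently --- and this is the geometric content behind the linear-algebra --- the hypersurface $\mathcal{K}_0\subset C$ parametrizing volume-normalized invariant K\"ahler metrics has dimension $\dim\mathfrak{c}-1=0$, so it reduces to the single point $\{\bar g\}$; every analytic curve in $\mathcal{K}_0$ with $g_0=\bar g$ is then constant, both one-sided derivatives of $\lambda_1$ vanish, and the extremality inequalities hold trivially. I would present whichever of these two phrasings fits best with the surrounding notation, probably the first, since it is the one that literally invokes Lemma~\ref{trace}.

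I do not expect a genuine obstacle here: the only non-formal input is the equality $\dim\mathfrak{c}=|\Sigma-\Theta|$, which follows from the characterization of $\Pi_{\mathfrak{h}}$ as the roots vanishing on $\mathfrak{c}$ together with $b_2(G/K)=\dim\mathfrak{c}$, both already stated above, and from the correspondence between simple roots outside $\Theta$ and a basis of $\mathfrak{c}$. Once that is in place the argument is purely formal, as the remark preceding the theorem already signals.
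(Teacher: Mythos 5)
Your proposal is correct and follows the same route as the paper, which likewise derives the theorem immediately from Lemma~\ref{trace} and the observation that $|\Sigma-\Theta|=1$ forces $\dim\mathfrak{c}=1$, so that $\mathfrak{c}^{\ast}$ is spanned by the nonzero functional $\mathrm{Tr}$ and $\phi$ is automatically a scalar multiple of it. The alternative geometric phrasing you offer (that $\mathcal{K}_0$ reduces to the single point $\{\bar g\}$) is a nice complement but not part of the paper's argument.
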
 

\begin{corollary}
The respective Kähler-Einstein metrics on partial flag manifolds with two isotropic summands in \ref{2sum}, on the partial flag manifolds with three isotropic summands from items 1 to 7 in \ref{3sum} and on the partial flag manifolds with four isotropic summands from items 1 to 4 in \ref{4sum} are $\lambda_1^{IK}$-extremal.
\end{corollary}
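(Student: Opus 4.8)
The plan is to reduce the statement entirely to the preceding theorem, so that the only remaining content is a bookkeeping check against the classifications recalled in Section~\ref{flag}. Recall that the theorem asserts: whenever the set $\Sigma-\Theta$ of complementary simple roots is a singleton, the K\"ahler--Einstein metric $\bar g$ on $G/K$ is $\lambda_1^{IK}$-extremal. Equivalently, by the remark following Lemma~\ref{trace}, in that case $\dim\mathfrak{c}=1$, the hyperplane $Y=\textrm{Ker}(\textrm{Tr})$ is trivial, and the functional $\phi\in\mathfrak{c}^{\ast}$ is automatically proportional to $\textrm{Tr}$, which is exactly the criterion of Lemma~\ref{trace}. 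Thus each of the three families in the statement needs only to be identified as a family with $|\Sigma-\Theta|=1$.

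First I would treat the two-summand case. By the structural description recorded just before Theorem~\ref{2sum}, a flag manifold with $\mathfrak{m}=\mathfrak{m}_1\oplus\mathfrak{m}_2$ has $\Sigma-\Theta=\{\alpha_{i_0}\}$, with $\alpha_{i_0}$ a simple root of height $2$; in particular $|\Sigma-\Theta|=1$. Hence every space in the list (1)--(13) of Theorem~\ref{2sum} satisfies the hypothesis of the preceding theorem, and its K\"ahler--Einstein metric is $\lambda_1^{IK}$-extremal.

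Next I would treat the three- and four-summand cases in the same way. For $\mathfrak{m}=\mathfrak{m}_1\oplus\mathfrak{m}_2\oplus\mathfrak{m}_3$ the possibilities for $\Sigma-\Theta$ split into Case~1, where $\Sigma-\Theta=\{\alpha_{i_0}\}$ with $ht(\alpha_{i_0})=3$, and Case~2, where $|\Sigma-\Theta|=2$; Theorem~\ref{3sum} states that items (1)--(7) of its list are precisely the spaces in Case~1, so for those seven spaces $|\Sigma-\Theta|=1$ and the theorem applies. Similarly, for $\mathfrak{m}=\mathfrak{m}_1\oplus\mathfrak{m}_2\oplus\mathfrak{m}_3\oplus\mathfrak{m}_4$ the Case~1 possibility is $\Sigma-\Theta=\{\alpha_{i_0}\}$ with $ht(\alpha_{i_0})=4$, and Theorem~\ref{4sum} records that items (1)--(4) of its list are exactly the Case~1 spaces; again $|\Sigma-\Theta|=1$ for those four, so the theorem yields $\lambda_1^{IK}$-extremality. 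Assembling the three cases gives the corollary.

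There is no analytic or representation-theoretic obstacle left once the theorem is in hand; the only point worth checking carefully is that the hypothesis of the theorem is literally ``$\Sigma-\Theta$ contains precisely one simple root'' and is therefore matched by the defining property of Case~1 in each of Theorems~\ref{2sum}, \ref{3sum}, \ref{4sum}, and that the list items named in the statement are exactly those Case~1 families and do not overlap with the Case~2 items (which have $\dim\mathfrak{c}=2$ and are treated separately elsewhere). Once this identification is made, the proof is immediate.
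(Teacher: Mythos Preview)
Your proposal is correct and matches the paper's own reasoning: the corollary is stated immediately after the theorem on one-element $\Sigma-\Theta$, and the paper treats it as an immediate consequence of that theorem together with the classification data in Theorems~\ref{2sum}, \ref{3sum}, \ref{4sum} identifying items (1)--(13), (1)--(7), and (1)--(4) respectively as precisely the Case~1 spaces with $|\Sigma-\Theta|=1$. Your write-up simply makes this bookkeeping explicit, which is exactly the intended argument.
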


We will apply a criterion, due to Panelli and Podestà (see \cite{Panelli}), which answers the question when the metric $\bar{g}$ on a generalized flag manifold is $\lambda_1^{IK}$-extremal. Note that the following theorem is a reformulation of the previous lemma in terms of algebraic data of the Lie algebra $\mathfrak{g}$ of $G$ and makes sense for $\dim\mathfrak{c}\geq 2$, that is, for $b_2(M)\geq 2,$	where $b_2(M)$ is the second Betti number of the space $M$. 

\begin{theorem} [\cite{Panelli}] Let $G$ be a compact and simple Lie group and let $M=G/K$ a generalized flag manifold endowed with a $G$-invariant complex structure $J$. Let $\bar{g}$ be a Kähler-Einstein metric with $Ric_{\bar{g}}=\bar{g}$ and suppose that $G$ coincides locally with the isometry group of $\bar{g}$. The metric $\bar{g}$ is \emph{$\lambda_1^{KI}$-extremal} if and only if 

\begin{equation}
\delta_{\m}|_{\mathfrak{c}}=\dfrac{\left\|\delta_{\m}\right\|^2}{\dim_{\mathbb{C}}M}\cdot \displaystyle\sum_{\alpha\in \Pi^+_{\m}}\dfrac{\alpha|_{\mathfrak{c}}}{B(\alpha,\delta_{\m})}. \label{criterion}
\end{equation}

\end{theorem}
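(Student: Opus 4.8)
The starting point is Lemma~\ref{trace}, which reduces $\lambda_1^{IK}$-extremality to the existence of $c\in\R$ with $\phi = c\cdot\mathrm{Tr}$ on $\mathfrak{c}$, where $\phi\in\mathfrak{c}^{\ast}$ is defined by $Q_h(u,u)=\phi(h)\,B(u,u)$ for $h\in\mathfrak{c}$ and $u\in E_1$. The plan is to compute $\phi$ explicitly in terms of root data. First I would use the identification of $E_1$ with the space of Killing fields via $u\mapsto J(\mathrm{grad}\,u)$ together with the root-space decomposition $\mathfrak{m}^{\mathbb{C}}=\bigoplus_{\alpha\in\Pi_{\m}}\lie_\alpha$: the eigenspace $E_1$ decomposes under $K$ (equivalently, under $\mathfrak{h}^{\mathbb{C}}$) into weight lines indexed by $\alpha\in\Pi^+_{\m}$, so that a Weyl-type basis $\{u_\alpha\}_{\alpha\in\Pi^+_{\m}}$ of (the complexification of) $E_1$ is $B$-orthogonal, consistent with the remark in the text that the $L^2(\bar g)$ product on $E_1$ is a multiple $r\cdot B$.

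The second step is to evaluate $Q_h(u_\alpha,u_\alpha)=\int_M\langle du_\alpha\otimes du_\alpha,h\rangle\,d\mu_{\bar g}$ for $h\in\mathfrak{c}$. Writing $h$ via $\mathrm{ad}$ as in Remark's formula $\omega_h(X,Y)=B(\mathrm{ad}_h X,Y)$, one sees that $h$ acts on the root line $\lie_\alpha$ by the scalar $\alpha(h)$ (this is exactly why $h\in\mathfrak{c}$ kills $\Pi_{\h}$ and is diagonalized by the $\Pi_{\m}$-weights). Hence $\langle du_\alpha\otimes du_\alpha, h\rangle$ is, pointwise, a universal constant times $\alpha|_{\mathfrak{c}}(h)$ times $|du_\alpha|^2_{\bar g}$. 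Integrating, $\int_M |du_\alpha|^2\,d\mu_{\bar g}$ is the Dirichlet energy of $u_\alpha$, which by $\Delta_{\bar g}u_\alpha=2u_\alpha$ equals $2\|u_\alpha\|^2_{L^2}= 2r\,B(u_\alpha,u_\alpha)$; but one must track the $\alpha$-dependence of the normalization of $u_\alpha$ coming from the Kähler-Einstein metric $\bar g$ itself, whose components on $\m_\alpha$ scale like $B(\alpha,\delta_{\m})$ (this is precisely the content of $\bar g=\omega(\cdot,J\cdot)$ with $\xi=\eta_{\m}=-i\hat\delta_{\m}$). Carrying this through yields, for a common constant,
$$
\phi(h)=\mathrm{const}\cdot\sum_{\alpha\in\Pi^+_{\m}}\frac{\alpha|_{\mathfrak{c}}(h)}{B(\alpha,\delta_{\m})},\qquad h\in\mathfrak{c}.
$$

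The third step identifies the trace functional: $\mathrm{Tr}(h)=\langle h,\bar g\rangle$, and since $\bar g$ has weight-$\alpha$ component proportional to $B(\alpha,\delta_{\m})$ on each $\m_\alpha$, pairing with $h$ (which acts by $\alpha(h)$ there) collapses the normalizations and gives $\mathrm{Tr}(h)=\mathrm{const}'\cdot\sum_{\alpha\in\Pi^+_{\m}}\alpha|_{\mathfrak{c}}(h)=\mathrm{const}'\cdot\delta_{\m}|_{\mathfrak{c}}(h)$. Combining with Lemma~\ref{trace}, $\bar g$ is $\lambda_1^{IK}$-extremal iff $\phi$ and $\mathrm{Tr}$ are proportional on $\mathfrak{c}$, i.e.\ iff
$$
\delta_{\m}|_{\mathfrak{c}}=\kappa\sum_{\alpha\in\Pi^+_{\m}}\frac{\alpha|_{\mathfrak{c}}}{B(\alpha,\delta_{\m})}
$$
for some scalar $\kappa$; pairing both sides with $\hat\delta_{\m}\in i\mathfrak{c}$ evaluates $\kappa=\|\delta_{\m}\|^2/\dim_{\mathbb{C}}M$, since $\sum_{\alpha\in\Pi^+_{\m}} B(\alpha,\delta_{\m})/B(\alpha,\delta_{\m})=|\Pi^+_{\m}|=\dim_{\mathbb{C}}M$ and $\langle\delta_{\m}|_{\mathfrak c},\delta_{\m}|_{\mathfrak c}\rangle=\|\delta_{\m}\|^2$ because $\hat\delta_{\m}\in i\mathfrak c$ already. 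This is exactly \eqref{criterion}.

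The main obstacle is the second step: correctly pinning down the $\alpha$-dependent normalization factors, i.e.\ how the Killing potential $u_\alpha$, the Kähler-Einstein metric $\bar g$ restricted to $\m_\alpha$, and the integration over $M$ each contribute powers of $B(\alpha,\delta_{\m})$. This requires the explicit description of $\bar g$ from $\eta_{\m}=-i\hat\delta_{\m}$ and care with the action of $\mathfrak{c}$ on root lines; everything else is a normalization bookkeeping followed by the pairing argument that fixes the constant. One should also verify the hypothesis that $G$ locally coincides with the isometry group so that $E_1$ is genuinely the space of Killing fields and no extra eigenfunctions inflate $E_1$ — but this is assumed in the statement.
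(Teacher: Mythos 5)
First, a point of reference: the paper does not prove this theorem --- it is imported verbatim from Panelli--Podest\`a and used as a black box (the text explicitly calls it ``a reformulation of the previous lemma''), so there is no in-paper argument to measure your proposal against. Judged on its own terms, your skeleton is the right one: reduce to Lemma \ref{trace}, express both $\phi$ and $\mathrm{Tr}$ as explicit linear functionals on $\mathfrak{c}$ in root data, and observe that proportionality of the two resulting functionals forces the constant $\left\|\delta_{\m}\right\|^2/\dim_{\mathbb{C}}M$ by pairing with $\hat\delta_{\m}\in i\mathfrak{c}$ and using $|\Pi^+_{\m}|=\dim_{\mathbb{C}}M$. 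That last observation is correct and is exactly where the specific constant in \eqref{criterion} comes from.

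The gap is that your second step --- the only step carrying real content --- is not executed, and the output you guess for it is not right as stated. Two concrete problems. (i) $E_1$ is isomorphic to $\mathfrak{g}$ (all Killing fields), not to $\m$: its weight decomposition under $\mathfrak{h}$ runs over all of $\Pi$ plus a zero-weight space, and the restriction to $\Pi^+_{\m}$ only emerges after projecting fundamental vector fields to $\m$ at the origin. Relatedly, the pointwise identity $\langle du_\alpha\otimes du_\alpha,h\rangle=\mathrm{const}\cdot\alpha(h)\,|du_\alpha|^2$ cannot hold globally, since at $g\cdot o$ the relevant vector is $\mathrm{Ad}(g^{-1})$ applied to a weight vector, which is no longer a weight vector; the computation must instead sum $Q_h$ over a full $B$-orthonormal basis of $E_1\cong\mathfrak{g}$ and use $G$-invariance to reduce to the origin. (ii) When one does this, the roles come out opposite to what you wrote: parametrizing $h$ by $v$ via $h|_{\m_\alpha}=\alpha(v)(-B)$, the fact that $du_X=\iota_{X^{*}}\omega$ depends only on $\omega$ makes the metric normalizations cancel, giving $\phi(h)\propto\sum_{\alpha\in\Pi^+_{\m}}\alpha(v)=\delta_{\m}(v)$, whereas $\mathrm{Tr}(h)=\mathrm{tr}_{\bar g}h\propto\sum_{\alpha\in\Pi^+_{\m}}\alpha(v)/B(\alpha,\delta_{\m})$ because the K\"ahler--Einstein components $g_\alpha\propto B(\alpha,\delta_{\m})$ enter the trace in the denominator. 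Since the condition $\phi=c\cdot\mathrm{Tr}$ is symmetric in the two functionals, your final criterion is unaffected; but the swap shows that the normalization bookkeeping you yourself flag as ``the main obstacle'' was not actually resolved, and until it is, what you have is a plausible plan rather than a proof.
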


As a consequence, one has in the same paper, among other results, the following characterization for {\it maximal flag manifolds}.

\begin{theorem} [\cite{Panelli}] Let $G$ be a compact classic Lie group. If $(G/T,J)$ is a maximal flag manifold $(\textrm{dim T}\geq2)$, $\bar{g}$ is $\lambda_1^{IK}$-extremal if and only if $G=SU(3)$.
\end{theorem}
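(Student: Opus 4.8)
The plan is to apply the criterion \eqref{criterion} with $\Theta=\emptyset$, so that $\mathfrak{c}=\mathfrak{h}$, $\Pi_{\mathfrak{m}}^+=\Pi^+$, $\delta_{\mathfrak{m}}=\delta:=\sum_{\alpha\in\Pi^+}\alpha$ (which is twice the half-sum of positive roots), and every restriction $\cdot|_{\mathfrak{c}}$ is the identity. The criterion then reads
\begin{equation*}
\delta \;=\; \frac{\|\delta\|^2}{\dim_{\mathbb{C}}M}\sum_{\alpha\in\Pi^+}\frac{\alpha}{B(\alpha,\delta)},
\end{equation*}
an identity of vectors in $\mathfrak{h}^*$. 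First I would pair both sides with an arbitrary weight, or more efficiently evaluate on the coroots $\alpha_i^\vee$ associated with the simple roots, using $\langle\delta,\alpha_i^\vee\rangle=2$ for all $i$ (this is the standard fact $\delta=2\rho$, $\langle\rho,\alpha_i^\vee\rangle=1$). The left side contributes the constant vector with all coordinates $2$ in the fundamental-weight basis; the right side becomes a weighted sum of the $\langle\alpha,\alpha_i^\vee\rangle$'s. The strategy is to show that equality forces the ratios $B(\alpha,\delta)$ to be ``balanced'' across $\Pi^+$ in a way that the root system of a classical group of rank $\geq 2$ cannot sustain, except for $A_2$.

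The key computational steps, carried out type by type ($A_n$, $B_n$, $C_n$, $D_n$ with rank $\geq 2$, plus $G_2$ for completeness though the statement restricts to classical groups), are: (i) fix the standard realization of $\Pi^+$ in coordinates $e_i$; (ii) compute $\delta$ explicitly (e.g. $\delta=\sum_i(n+1-2i+1)\,\cdots$ — the well-known expressions, $2\rho$); (iii) for each positive root $\alpha=e_i-e_j$ etc., compute $B(\alpha,\delta)$, which up to the normalization of $B$ is a small integer depending only on the ``position'' of $\alpha$; (iv) substitute into the right-hand side and compare coordinate by coordinate with $\delta$. For $A_2$ one checks directly that the three positive roots $\alpha_1,\alpha_2,\alpha_1+\alpha_2$ have $B(\alpha,\delta)$ equal to $2,2,4$ (suitably normalized) and that the identity holds — here $\dim_{\mathbb{C}}M=3$ and $\|\delta\|^2$ conspires correctly. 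For $A_n$ with $n\geq 3$, and for all $B_n,C_n,D_n$ of rank $\geq 2$, I would exhibit a single coordinate (or a single pairing against a coroot) where the two sides disagree; by symmetry of the setup it suffices to test against $\alpha_1^\vee$ and, say, $\alpha_2^\vee$, and show the resulting rational numbers cannot both equal $2$.

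The main obstacle I anticipate is bookkeeping rather than conceptual: the right-hand side is a sum over all of $\Pi^+$ (size $\sim n^2$) of terms $\alpha/B(\alpha,\delta)$ with denominators that vary, so getting a clean closed form for each coordinate requires organizing the positive roots by the value of $B(\alpha,\delta)$ (equivalently, by ``height-like'' statistics) and summing geometric-type subsums. A secondary subtlety is the normalization of $B$ and of $\|\delta\|^2$: these scalings must be tracked consistently, though ultimately only the shape of the identity (not the overall scale) matters, so one can normalize $B$ so that long roots have a convenient length. Once the type-by-type computation is set up, ruling out each infinite family reduces to a one-line inequality between two explicit polynomials in the rank $n$, and isolating $SU(3)=A_2$ as the unique solution is then immediate. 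Finally I would remark that the argument also recovers, via the Remark following Lemma~\ref{trace}, the trivial cases $\dim\mathfrak{c}=1$, consistently with the case excluded by the hypothesis $\dim T\geq 2$.
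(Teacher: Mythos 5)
This theorem is stated in the paper only as a quoted result from \cite{Panelli}; the paper supplies no proof of its own, so there is nothing internal to compare your argument against. That said, your strategy is exactly the one the cited reference uses and the one the surrounding text sets up: for $\Theta=\emptyset$ one has $\mathfrak{c}=\mathfrak{h}$, every isotropy summand is $\mathfrak{m}_\alpha$ with $m_\alpha=1$, $\delta_{\mathfrak{m}}=2\rho$, and the restriction maps are trivial, so the criterion \ref{criterion} becomes the vector identity $2\rho=\frac{\|2\rho\|^2}{|\Pi^+|}\sum_{\alpha\in\Pi^+}\alpha/B(\alpha,2\rho)$, to be checked type by type. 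Your $A_2$ verification is correct (with $\|\alpha\|^2=2$ one gets $B(\alpha,2\rho)=2,2,4$, $\mu=8/3$, and the combination vanishes), and your observation that the identity is invariant under rescaling of $B$ is also correct, so the normalization issue you worry about is harmless. The one thing to be clear-eyed about is that, as written, this is a plan rather than a proof: the entire content of the ``only if'' direction lies in the type-by-type computations for $A_n\,(n\geq 3)$, $B_n$, $C_n$, $D_n$, which you defer to ``bookkeeping.'' Those computations do go through — e.g.\ for $A_3$, pairing the discrepancy $\sum_{\alpha\in\Pi^+}(\mu/B(\alpha,2\rho)-1)\alpha$ with $e_1$ gives $2/3-1/6-4/9=1/18\neq 0$ — but a complete argument must actually organize the positive roots by the value of $B(\alpha,2\rho)$ and exhibit one nonvanishing coordinate for each family and each rank, not merely assert that this can be done. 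Two smaller caveats: testing only against $\alpha_1^\vee$ and $\alpha_2^\vee$ suffices to refute the identity when one of those pairings fails, but you should verify that one of them does fail uniformly in the rank for each classical type; and the criterion as stated in the paper assumes $G$ simple and locally the full isometry group of $\bar g$, hypotheses that should be checked (or at least noted) for the classical $G/T$ before the criterion is invoked.
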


\begin{remark} Let $M=G/T$ be a maximal flag manifold of a compact simple Lie group $G$. The isotropy representation of $M$ has a nice description. Indeed, it can be decomposed into a discrete sum of $2$-dimensional pairwise non-equivalent irreducible $T$-submodules $\m_{\alpha}$ as follows: $$\m=\sum_{\alpha\in \Pi^+}\m_{\alpha}.$$The number of these submodules is equal to the cardinality of $\Pi^+$, the set of positive roots of $\mathfrak{g}$. However, in partial cases, this decomposition must be determined in a particular way for each space.
\end{remark}

To convert the condition \ref{criterion} in a computable formula, we will introduce the notion of $T$-roots and its relationship with the decomposition of $\m$ into isotropic summands.  

Let $B$ be the Cartan-Killing of the Lie algebra $\lie$ of $G$ and $\mathfrak{k}$ the Lie algebra of $K$. Take a $B$-orthogonal decomposition $\lie=\mathfrak{k}\oplus\mathfrak{m}$, where $\mathfrak{m}$ is the isotropy representation of $K$. Fixed a Cartan subalgebra $\mathfrak{h}$ of $\lie$, denote by $\Pi$ the root system of $\lie^{\mathbb{C}}$ with respect to $\mathfrak{h}^{\mathbb{C}}$ and by $\Sigma$ the system of simple roots. Define the restriction map $p:(i\mathfrak{h})^*\longrightarrow (i\mathfrak{c})^*$, given by $p(\alpha)=\alpha|_{\mathfrak{c}}$; the elements of the set $\Pi_T=p(\Pi_{\m})$ are called $T$-roots. It is known (see \cite{alektroot}) that there exists a 1-1 correspondence between $T$-roots $\rho$ and irreducible submodules $\m_{\rho}$ of $\m^{\mathbb{C}}$, namely

$$\Pi_T\ni\rho\leftrightarrow\m_{\rho}=\sum_{p(\alpha)=\rho}\lie_{\alpha},$$
and therefore $$\m^{\mathbb{C}}=\sum_{\rho\in\Pi_T}\m_{\rho}.$$

We can also describe the elements of $\Pi_T$ explicitly as follows. Fixed \linebreak $\Sigma=\{\alpha_1,\alpha_2,\ldots,\alpha_m,\phi_1,\phi_2,\ldots,\phi_k\}$, where $\Theta=\{\phi_1,\phi_2,\ldots,\phi_k\}$, denote by $\{\pi_1,\pi_2,\ldots,\pi_m\}$ the set of fundamental weights associated to $\alpha_1,\alpha_2,\ldots,\alpha_m$, i.e.,   
$$\dfrac{2(\pi_j,\alpha_i)}{(\alpha_i,\alpha_i)}=\delta_{ij}, \quad (\pi_j,\phi_i)=0,$$ where $(\cdot,\cdot)$ is the restriction of the Cartan-Killing form $B$ to $\h^*$. The fundamental weights form a basis to $\mathfrak{c}^*$, so that the $T$-root $\rho=p(\alpha)=\alpha|_{\mathfrak{c}}$ associated to a root $\alpha\in\Pi_{\m},$ is given by 
$$p(\alpha)=\sum_{j=1}^{m}\dfrac{(2\alpha,\alpha_j)}{(\alpha_j,\alpha_j)}\pi_j.$$

Denoting by $\Pi_T^+=\{\rho_1,\rho_2,\ldots,\rho_l\}$ the set of $T$-roots which are image of positive roots in $\Pi^+$, we have that each $\rho_j$ corresponds to the submodule $\m_j$ of complex dimension $m_j$. Then $\m^{\mathbb{C}}=\bigoplus_{i=1}^{l}\m_j\oplus\bar{\m_j}$. For each $\rho\in\Pi_T^+ $, we alson have that the number $B(\alpha,\delta_{\m})$ with $p(\alpha)=\rho$ does not depends on $\alpha$ with $p(\alpha)=\rho$ and we denote it by $B(\rho,\delta_{\m})$. It follows that the condition \ref{criterion} can be expressed as 
\begin{equation}
\displaystyle \sum_{i=1}^l\left(\dfrac{\mu}{\beta_j}-1\right)m_j\rho_j=0, \label{formula}
\end{equation}
where $\beta_j:=B(\rho_j,\delta_{\m})$, $\mu=\left\|\delta_{\m}\right\|^2/\textrm{dim}_{\mathbb{C}}M,$ and $\rho_j$ is the $T$-root associated to the isotropic summand $\m_j$, of dimension $m_j$. 

We are able now to classify completely the $\lambda_1^{IK}$-extremality for Kähler-Einstein metrics $\bar{g}$ on partial flag manifolds with three and four isotropic summands by means formula \ref{formula}. We will begin this classification from the special case represented by the family of partial flag manifolds with three isotropic summands given by $SU(l+m+n)/S(U(l)\times U(m)\times U(n)), \, l,m,n>0$.

\begin{theorem}
For the family of partial flag manifolds with tree isotropic summands given by \linebreak $SU(l+m+n)/S(U(l)\times U(m)\times U(n))$, we have that the corresponding Kähler-Einstein metric is $\lambda_1^{IK}$-extremal if and only if $l=m=n$.
\end{theorem}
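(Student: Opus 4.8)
The plan is to make the abstract criterion \eqref{formula} completely explicit for the space $M = SU(l+m+n)/S(U(l)\times U(m)\times U(n))$, which falls into Case 2 of the three-isotropic-summand classification, so $\Sigma - \Theta = \{\alpha_{i_0},\alpha_{j_0}\}$ with $\operatorname{ht}(\alpha_{i_0}) = \operatorname{ht}(\alpha_{j_0}) = 1$, and $\mathfrak{c}$ is $2$-dimensional. First I would set up the root system of $A_{l+m+n-1}$ in the standard way (roots $e_a - e_b$), choose $\Theta$ so that the Levi block is $\mathfrak{s}(\mathfrak u(l)\oplus\mathfrak u(m)\oplus\mathfrak u(n))$, and identify the three isotropy summands $\m_1,\m_2,\m_3$ with the three off-diagonal blocks: the complex dimensions are $m_1 = lm$, $m_2 = mn$, $m_3 = ln$ (with $\dim_{\mathbb C} M = lm+mn+ln$). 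I would then write each $T$-root $\rho_j = p(\alpha)$, $\alpha\in\Pi_{\m}^+$, in terms of the two fundamental weights $\pi_{i_0},\pi_{j_0}$ using the displayed formula $p(\alpha) = \sum_j \frac{2(\alpha,\alpha_j)}{(\alpha_j,\alpha_j)}\pi_j$; in the $A_n$ setting this just reads off the $\alpha_{i_0}$- and $\alpha_{j_0}$-coefficients of $\alpha$, so $\rho_1,\rho_2,\rho_3$ are $\pi_{i_0}$, $\pi_{j_0}$, $\pi_{i_0}+\pi_{j_0}$ up to signs/conventions, and in particular $\rho_3 = \rho_1 + \rho_2$.

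Next I would compute the scalars $\beta_j = B(\rho_j,\delta_{\m})$ and $\mu = \|\delta_{\m}\|^2/\dim_{\mathbb C}M$. The cleanest route is to use the known fact that for a flag manifold the Koszul number $B(\alpha,\delta_{\m})$ depends only on the $T$-root $\rho = p(\alpha)$, and for type $A$ these can be evaluated by a direct count: $\delta_{\m} = \sum_{\alpha\in\Pi_{\m}^+}\alpha$, and pairing a root $e_a - e_b$ (with $a$ in block $I$, $b$ in block $II$, etc.) against $\delta_{\m}$ reduces to summing over the off-diagonal positive roots. I expect to get, after normalizing $B$ appropriately, something like $\beta_1 \propto l+m$, $\beta_2 \propto m+n$, $\beta_3 \propto l+n$ (each being proportional to the "combined size" of the two blocks joined by that summand) and a symmetric value of $\mu$; the precise constants do not matter because \eqref{formula} is homogeneous. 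Substituting into \eqref{formula} and using $\rho_3 = \rho_1+\rho_2$ with $\rho_1,\rho_2$ linearly independent, the single vector equation $\sum_j(\mu/\beta_j - 1)m_j\rho_j = 0$ becomes the pair of scalar equations
\begin{equation}
\left(\frac{\mu}{\beta_1}-1\right)m_1 + \left(\frac{\mu}{\beta_3}-1\right)m_3 = 0, \qquad \left(\frac{\mu}{\beta_2}-1\right)m_2 + \left(\frac{\mu}{\beta_3}-1\right)m_3 = 0.
\end{equation}

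Finally I would analyze this $2\times 2$ system in the integer parameters $l,m,n$. Subtracting the two equations gives $(\mu/\beta_1 - 1)m_1 = (\mu/\beta_2 - 1)m_2$, i.e. a polynomial identity in $l,m,n$; plugging in the explicit $m_j$ and $\beta_j$ I expect the "if" direction ($l=m=n \Rightarrow$ extremal) to be an immediate symmetry check, and for the "only if" direction I would show the polynomial identity forces $l=m=n$ — most efficiently by assuming WLOG an ordering, say $l\le m\le n$, and deriving a contradiction unless all are equal (e.g. by showing one side is strictly larger than the other when the inequality is strict, using monotonicity of the relevant rational functions in $l,m,n$). The main obstacle I anticipate is purely computational bookkeeping: getting the Koszul numbers $\beta_j$ and $\mu$ right with consistent normalization of the Cartan–Killing form, since for $\mathfrak{su}(N)$ one has $B(X,Y) = 2N\operatorname{tr}(XY)$ and it is easy to drop factors; once the $\beta_j$ and $m_j$ are pinned down, the reduction of the resulting polynomial equation to $l=m=n$ should be a short and robust elementary-inequality argument.
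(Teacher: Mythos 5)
Your overall strategy is the same as the paper's: identify the three $T$-roots, their multiplicities $m_j$ and Koszul numbers $\beta_j$, and reduce \eqref{formula} to a system of scalar polynomial equations in $l,m,n$. However, your anticipated value of the third Koszul number is wrong, and in a way you could have caught from your own setup: since $\rho_3=\rho_1+\rho_2$ (the $T$-root of the summand joining the two outer blocks is the sum of the other two), linearity of $B(\cdot,\delta_{\m})$ forces $\beta_3=\beta_1+\beta_2=(l+m)+(m+n)=l+2m+n$, not $l+n$ --- the intermediate block contributes twice, so the ``combined size of the two blocks joined'' heuristic fails for the long root. The paper's data agree with this ($\beta$'s equal to $l+m$, $m+n$ and $l+2m+n$; $\|\delta_{\m}\|^2=(l+m)(m+n)(l+n)$; $\dim_{\mathbb C}M=lm+mn+ln$). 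Your reduction to the two scalar equations $(\mu/\beta_1-1)m_1+(\mu/\beta_3-1)m_3=0$ and $(\mu/\beta_2-1)m_2+(\mu/\beta_3-1)m_3=0$ is correct and is a cleaner, equivalent form of the paper's three-equation system in the $\varepsilon$-basis (whose three equations sum to zero).

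The more serious issue is that if you carry out your plan with the corrected data you will not reach the stated conclusion. Subtracting your two equations gives $(l-n)\left[\mu m-(l+m)(m+n)\right]=0$, and $\mu m=(l+m)(m+n)$ reduces to $ln=0$, so the system forces $l=n$; but substituting $l=n$ back makes both equations identically zero for every $m$. Concretely, for $(l,m,n)=(1,2,1)$ one has $\mu=18/5$, $(\beta_1,\beta_2,\beta_3)=(3,3,6)$, $(m_1,m_2,m_3)=(2,2,1)$, and the left side of \eqref{formula} is $\tfrac25\rho_1+\tfrac25\rho_2-\tfrac25(\rho_1+\rho_2)=0$, so the criterion is satisfied even though $l\neq m$. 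Thus the criterion characterizes $l=n$ (for the canonical complex structure), not $l=m=n$, and the paper's assertion that Mathematica returns only $l=m=n$ is not consistent with its own displayed system, which also vanishes at $(1,2,1)$. (Coincidentally, your incorrect guess $\beta_3=l+n$ would reproduce the paper's claimed answer.) Before investing in the monotonicity argument for the ``only if'' direction, you should recompute the Koszul numbers independently and decide which statement the computation actually supports.
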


\begin{proof}
Let us consider $SU(l+m+n)/S(U(l)\times U(m)\times U(n))$, $l,m,n>0$, with a complex invariant structure $J$, corresponding to the canonical root system of $\mathfrak{su}(l+m+n)$. In this case, \linebreak $\delta_{\m}=(l+m)\Lambda_l+(m+n)\Lambda_{l+m}$, where $\Lambda_l$ and $\Lambda_{l+m}$ are the fundamental weights associated with the simple roots $\Sigma-\Theta=\{\alpha_l, \alpha_{l+m}\}$. Normalizing the C-K form so that $\left\|\alpha\right\|^2=2,\, \forall \, \alpha\in\Pi$, it follows that

\begin{itemize}
\item $\left\|\delta_{\m}\right\|^2=(l+m)(l+n)(m+n)$
\item $\rho_1=\varepsilon_1-\varepsilon_{l+1}$, $\rho_2=\varepsilon_1-\varepsilon_{l+m+1}$, $\rho_3=\varepsilon_{l+1}-\varepsilon_{l+m+1}$
\item $\beta_1=B(\rho_1,\delta_{\m})=(l+m)$, $\beta_2=B(\rho_2,\delta_{\m})=(2m+l+n)$, $\beta_3=B(\rho_3,\delta_{\m})=(m+n)$.
\end{itemize}

Since $\dim\m_1=\left|\Delta(1,0)\right|=lm$, $\dim\m_2=\left|\Delta(0,1)\right|=ln$ and $\dim\m_3=\left|\Delta(1,1)\right|=mn$, we have that $\dim_{\mathbb{C}}M=\dim\m=mn+ln+ln$. Applying the $\lambda_1^{KI}$ criterion, we obtain the following linear combination

$$\left(\dfrac{\mu}{\beta_1}-1\right) m_1\rho_1 +\left(\dfrac{\mu}{\beta_2}-1\right)m_2\rho_2 +\left(\dfrac{\mu}{\beta_3}-1\right)m_3\rho_3=$$
{\small
$$\left(\dfrac{(l+m)(m+n)(l+n)}{(mn+lm+ln)(l+m)}-1\right)lm(\varepsilon_1-\varepsilon_{l+1})$$ $$+\left(\dfrac{(l+m)(m+n)(l+n)}{(mn+lm+ln)(2m+l+n)}-1\right)ln(\varepsilon_1-\varepsilon_{l+m+1})$$ $$+\left(\dfrac{(l+m)(m+n)(l+n)}{(mn+lm+ln)(m+n)}-1\right)mn(\varepsilon_{l+1}-\varepsilon_{l+m+1})=$$ $$\left[\left(\dfrac{(l+m)(m+n)(l+n)}{(mn+lm+ln)(l+m)}-1\right)lm+\left(\dfrac{(l+m)(m+n)(l+n)}{(mn+lm+ln)(2m+l+n)}-1\right)ln\right]\varepsilon_1+$$ $$\left[\left(-\dfrac{(l+m)(m+n)(l+n)}{(mn+lm+ln)(l+m)}+1\right)lm+\left(\dfrac{(l+m)(m+n)(l+n)}{(mn+lm+ln)(m+n)}-1\right)mn\right]\varepsilon_{l+1}+$$ $$\left[\left(-\dfrac{(l+m)(m+n)(l+n)}{(mn+lm+ln)(m+n)}+1\right)mn+\left(-\dfrac{(l+m)(m+n)(l+n)}{(mn+lm+ln)(2m+l+n)}+1\right)ln\right]\varepsilon_{l+m+1}=0$$ }
{\small
$$\Leftrightarrow\left\{
 \begin{array}{ccc}
 \left(\dfrac{(l+m)(m+n)(l+n)}{(mn+lm+ln)(l+m)}-1\right)lm+\left(\dfrac{(l+m)(m+n)(l+n)}{(mn+lm+ln)(2m+l+n)}-1\right)ln&=&0\\
 \left(-\dfrac{(l+m)(m+n)(l+n)}{(mn+lm+ln)(l+m)}+1\right)lm+\left(\dfrac{(l+m)(m+n)(l+n)}{(mn+lm+ln)(m+n)}-1\right)mn&=&0 \\
 \left(-\dfrac{(l+m)(m+n)(l+n)}{(mn+lm+ln)(m+n)}+1\right)mn+\left(-\dfrac{(l+m)(m+n)(l+n)}{(mn+lm+ln)(2m+l+n)}+1\right)ln&=&0\\
  \end{array}
 \right.,$$}since $\{\varepsilon_1,\varepsilon_{l+1},\varepsilon_{l+m+n+1}\}$ is L.I.. The homogeneous system above, with the help of Mathematica software, provides us solution $\{(l,m,n)\in\mathbb{N}^3;l=m=n\}$. Hence, the Kähler-Einstein metric is $\lambda_1^{IK}-$extremal only if $l=m=n$, which icludes the family $SU(3n)/S(U(n)\times U(n)\times U(n))$, $n\geq1$. 

\end{proof}

In \cite{Panelli} it was proved that the Kähler-Einstein metric on $SU(3n)/S(U(n)\times U(n)\times U(n))$ \linebreak is $\lambda_1^{IK}$-extremal. However, in this same work, it is not proved that, for the more general family \linebreak $SU(l+m+n)/S(U(l)\times U(m)\times U(n))$, $l,m,n>0$ the extremality property of the Kähler-Einstein metric only holds when $l=m=n$, as we have just shown.

To conclude the classification of the $\lambda_1^{IK}$-extremality for partial flag manifolds with three isotropic summands, we enunciate and prove the theorem below, where we study the family of homogeneous spaces represented by $SO(2l)/U(1)\times U(l-1)$, $l\geq 4$ as well as the exceptional flag \linebreak $E_6/U(1)\times U(1)\times SO(8)$.

\begin{theorem}
The respective Kähler-Einstein metrics on the partial flag manifolds with three isotropic summands given by $SO(2l)/U(1)\times U(l-1)$, $l\geq 4$ and $E_6/U(1)\times U(1)\times SO(8)$ are not $\lambda_1^{IK}$-extremal. 
\end{theorem}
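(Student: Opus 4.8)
The plan is to treat the two spaces separately, in each case writing down the explicit Lie-theoretic data and checking whether equation \eqref{formula} can hold. For both, $\dim\mathfrak{c}=2$ (two complementary simple roots), so the criterion is a genuine linear relation among the three $T$-roots $\rho_1,\rho_2,\rho_3$, not automatically satisfied. Since the $\rho_j$ span a $2$-dimensional space, any two of them are independent and the third is a linear combination; plugging that relation into \eqref{formula} collapses the vanishing of $\sum_j\big(\tfrac{\mu}{\beta_j}-1\big)m_j\rho_j$ to two scalar equations in the coefficients. The goal is to show these two equations are inconsistent.

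\textbf{The case $SO(2l)/U(1)\times U(l-1)$, $l\ge 4$.} First I would identify this as Case 2 of Theorem~\ref{3sum}, with $\Sigma-\Theta=\{\alpha_{i_0},\alpha_{j_0}\}$ two simple roots of height $1$. Using the standard $D_l$ root system $\Pi=\{\pm\varepsilon_i\pm\varepsilon_j\}$ and simple roots $\alpha_i=\varepsilon_i-\varepsilon_{i+1}$ ($1\le i\le l-1$), $\alpha_l=\varepsilon_{l-1}+\varepsilon_l$, the subset $\Theta$ should be chosen so that the Levi factor is $\mathfrak{u}(1)\oplus\mathfrak{u}(l-1)$; concretely $\Sigma-\Theta=\{\alpha_1,\alpha_2\}$ (so $\Theta=\{\alpha_3,\dots,\alpha_l\}$, whose span gives the $A_{l-1}$-type piece). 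Then I would compute, with the normalization $\|\alpha\|^2=2$: the three sets $\Delta(1,0)$, $\Delta(0,1)$, $\Delta(1,1)$ and hence $m_1,m_2,m_3$; the restricted roots $\rho_1,\rho_2,\rho_3$ in terms of the two fundamental weights $\pi_1,\pi_2$; the functional $\delta_{\mathfrak{m}}=\sum_{\alpha\in\Pi^+_{\mathfrak m}}\alpha$ and the scalars $\beta_j=B(\rho_j,\delta_{\mathfrak{m}})$; and $\mu=\|\delta_{\mathfrak m}\|^2/\dim_{\mathbb C}M$. With these in hand, \eqref{formula} becomes an explicit system; I would show it forces a contradiction (e.g. it would demand $\beta_1=\beta_2=\beta_3$, impossible since these are distinct linear functions of $l$, or some rational identity in $l$ with no solution $l\ge 4$). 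The resolved-root bookkeeping (getting $\Delta(1,0),\Delta(0,1),\Delta(1,1)$ and the $\beta_j$ exactly right) is the step that needs care, but it is a finite calculation of the same flavour as the one already carried out for type $A$.

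\textbf{The case $E_6/U(1)\times U(1)\times SO(8)$.} This is item (10) of Theorem~\ref{3sum}, again Case 2 with two height-$1$ complementary simple roots; in the Bourbaki labelling of $E_6$ one takes $\Theta=\{\alpha_2,\alpha_3,\alpha_4,\alpha_5\}$ (the $D_4$ sub-diagram, giving the $\mathfrak{so}(8)$ factor) and $\Sigma-\Theta=\{\alpha_1,\alpha_6\}$. I would list the $36$ positive roots of $E_6$, sort them by the pair of coefficients $(m_1,m_6)$ to read off $\Delta(1,0),\Delta(0,1),\Delta(1,1)$ and thus $m_1,m_2,m_3$ (by the $D_4$-triality symmetry exchanging $\alpha_1\leftrightarrow\alpha_6$ one expects $m_1=m_2$), compute $\delta_{\mathfrak m}$, the $T$-roots $\rho_j=p(\alpha)$ via $p(\alpha)=\sum_j \tfrac{2(\alpha,\alpha_j)}{(\alpha_j,\alpha_j)}\pi_j$ over $j\in\{1,6\}$, and the three numbers $\beta_j=B(\rho_j,\delta_{\mathfrak m})$ and $\mu$. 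Substituting into \eqref{formula} and using $\rho_3=\pm(\rho_1+\rho_2)$ (or whatever the actual relation is) yields two numerical equations; I would check with these explicit numbers that they cannot both hold, concluding non-extremality.

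\textbf{Main obstacle.} The conceptual content is light — everything reduces to \eqref{formula} — so the real work, and the only place errors can creep in, is the combinatorial Lie-theoretic data: correctly choosing $\Theta$ so the isotropy decomposition matches the three stated summands, enumerating the complementary roots with their simple-root coordinates, and evaluating $\delta_{\mathfrak m}$ and the pairings $\beta_j=B(\rho_j,\delta_{\mathfrak m})$ under the chosen normalization. For $E_6$ this is a bounded but somewhat tedious enumeration of $36$ roots; as in the preceding theorem, a symbolic check (e.g. Mathematica) that the resulting linear system in the $\varepsilon_i$- or $\pi_j$-coordinates has no solution is the natural way to finish. Once the data is pinned down, the failure of \eqref{formula} is immediate.
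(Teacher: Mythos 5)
Your overall strategy is exactly the paper's: for each space fix $\Theta$, compute $\delta_{\mathfrak m}$, the $T$-roots $\rho_j$, the numbers $\beta_j=B(\rho_j,\delta_{\mathfrak m})$, the dimensions $m_j$ and $\mu$, substitute into \eqref{formula}, and observe the resulting linear system has no admissible solution. Your $E_6$ setup ($\Sigma-\Theta=\{\alpha_1,\alpha_6\}$, $\Theta$ the $D_4$ sub-diagram) agrees with the paper, which finds $\delta_{\mathfrak m}=4\Lambda_1+4\Lambda_6$, $\beta_1=\beta_2=4$, $\beta_3=8$, $m_1=m_2=m_3=8$, and a manifestly nonzero combination.

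There is, however, a concrete error in your $SO(2l)/U(1)\times U(l-1)$ case: the choice $\Sigma-\Theta=\{\alpha_1,\alpha_2\}$ is wrong. In the $D_l$ diagram the nodes $\alpha_{l-1},\alpha_l$ both attach to $\alpha_{l-2}$, so deleting $\alpha_1$ and $\alpha_2$ leaves $\{\alpha_3,\dots,\alpha_l\}$, which is a sub-diagram of type $D_{l-2}$, not $A_{l-1}$; the corresponding Levi factor is $SO(2l-4)\times U(1)\times U(1)$. That homogeneous space has \emph{four} isotropy summands (since $\alpha_2$ has coefficient $2$ in the highest root of $D_l$, this painting falls into Case 2 of Theorem~\ref{4sum}, heights $1$ and $2$), and it is treated as a separate item later in the paper. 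To obtain $U(1)\times U(l-1)$ with three summands you must take $\Sigma-\Theta=\{\alpha_{l-1},\alpha_l\}$ (the two fork nodes, both of height $1$), leaving the $A_{l-2}$ chain $\{\alpha_1,\dots,\alpha_{l-2}\}$ for the $\mathfrak{su}(l-1)$ factor. With that choice the paper gets $\delta_{\mathfrak m}=l\Lambda_{l-1}+l\Lambda_l$, $\rho_1=\varepsilon_1-\varepsilon_l$, $\rho_2=\varepsilon_1+\varepsilon_l$, $\rho_3=\varepsilon_1+\varepsilon_{l-1}$, $\beta_1=\beta_2=l$, $\beta_3=2l$, $m_1=m_2=l-1$, $m_3=(l-1)(l-2)/2$, and the two resulting scalar equations $(l-1)(l^2+2l+8)=0$ and $(l-1)(l-2)(l+4)=0$ have no solution with $l\ge4$. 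Since every subsequent quantity ($\Delta(\cdot,\cdot)$, $\delta_{\mathfrak m}$, the $\beta_j$, the $m_j$) depends on $\Theta$, your computation as outlined would be carried out on the wrong manifold; the rest of your plan goes through once $\Theta$ is corrected.
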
 

\begin{proof}
For $SO(2l)/U(1)\times U(l-1)$, $l\geq 4$, with a complex invariant structure $J$, corresponding to the canonical root system of $\mathfrak{so}(2l)$, we have $\delta_{\m}=l\Lambda_{l-1}+l\Lambda_l$, where $\Lambda_{l-1}$ and $\Lambda_l$ are the fundamental weights associated with the simple roots $\Sigma-\Theta=\{\alpha_{l-1}, \alpha_l\}$. Then 

\begin{itemize}
\item $\left\|\delta_{\m}\right\|^2=\frac{l^2}{l+2}$
\item $\rho_1=\varepsilon_1-\varepsilon_{l}$, $\rho_2=\varepsilon_1+\varepsilon_l$, $\rho_3=\varepsilon_1+\varepsilon_{l-1}$
\item $\beta_1=B(\rho_1,\delta_{\m})=l$, $\beta_2=B(\rho_2,\delta_{\m})=l$, $\beta_3=B(\rho_3,\delta_{\m})=2l$.
\end{itemize}
Moreover, since $\dim\m_1=\left|\Delta(1,0)\right|=l-1$, $\dim\m_2=\left|\Delta(0,1)\right|=l-1$ e $\dim\m_3=\left|\Delta(1,1)\right|=\dfrac{(l-1)(l-2)}{2}$, one has $\dim_{\mathbb{C}}M=\dim\m=\dfrac{(l-1)(l+2)}{2}$.  Applying fórmula \ref{formula}, we obtain the linear combination
$$\left(\dfrac{\mu}{\beta_1}-1\right) m_1\rho_1 +\left(\dfrac{\mu}{\beta_2}-1\right)m_2\rho_2 +\left(\dfrac{\mu}{\beta_3}-1\right)m_3\rho_3$$
$$= \left(\dfrac{l^2}{l(l+2)}-1\right)(l-1)(\varepsilon_1-\varepsilon_l)+\left(\dfrac{l^2}{l(l+2)}-1\right)(l-1)(\varepsilon_1+\varepsilon_l)$$ $$+\left(\dfrac{l^2}{2l(l+2)}-1\right)\dfrac{(l-1)(l-2)}{2}(\varepsilon_1+\varepsilon_{l-1})$$ $$=\dfrac{(l-1)(l^2+2l+8)}{4(l+2)}\varepsilon_1+\dfrac{(l-1)(l-2)(l+4)}{4(l+2)}\varepsilon_{l-1}=0$$

$$\Leftrightarrow\left\{
 \begin{array}{ccc}
 (l-1)(l^2+2l+8)&=&0\\
 (l-1)(l-2)(l+4)&=&0 \\
   \end{array}
 \right.,$$
Since $\{\varepsilon_1,\varepsilon_{l-1},\varepsilon_l\}$ is L.I., the homogeneous system above provides us solution $l=1$, which implies that the Kähler-Einstein metric on $SO(2l)/U(1)\times U(l-1)$, $l\geq 4,$ is not $\lambda_1^{IK}-$extremal.

Now considering the flag manifold associated with the exceptional group $E_6$, given by\linebreak $E_6/U(1)\times U(1)\times SO(8)$, with a complex invariant structure $J$, corresponding to the canonical root system of $\mathfrak{e}_6$ and a subset of complementary roots $\Sigma-\Theta=\{\alpha_1, \alpha_6\}$, it folows that $\delta_{\m}=4\Lambda_1+4\Lambda_6$, where $\Lambda_1$ and $\Lambda_6$ are the fundamental weights associated with the simple roots $\alpha_1$ and $\alpha_6$. In this way, 

 \begin{itemize}
\item $\left\|\delta_{\m}\right\|^2=4$
\item $\rho_1=-(\varepsilon_3+\varepsilon_4+\varepsilon_6)$, $\rho_2=-(\varepsilon_2+\varepsilon_4+\varepsilon_7)$, $\rho_3=-(\varepsilon_3+\varepsilon_4+\varepsilon_7)$
\item $\beta_1=B(\rho_1,\delta_{\m})=\beta_2=B(\rho_2,\delta_{\m})=4$, $\beta_3=B(\rho_3,\delta_{\m})=8$.
\end{itemize} 

Since $\dim\m_1=\dim\m_2=\dim\m_3=8$, we have $\dim_{\mathbb{C}}M=\dim\m=24$. Applying \ref{formula}, we obtain

$$-(\mu-8)\rho_1-(2\mu-8)\rho_2-(\mu-8)\rho_3,$$ where $\mu=\left\|\delta_{\m}\right\|^2/\dim_{\mathbb{C}}M=1/6$, which is nonzero, since the $T$-roots $\rho_1, \rho_2$ and $\rho_3$ are L.I., i.e., from \ref{formula} we have that the corresponding Kähler-Einstein is not $\lambda_1^{IK}$-extremal.

\end{proof}

From now on, we will deal with the partial flag manifolds with four isotropic summands from items \emph{(5)} to \emph{(10)} in \ref{4sum}.

\begin{theorem}
The respective Kähler-Einstein metrics on the partial flag manifolds with four isotropic summands given by $SO(2l+1)/SO(2l-3)\times U(1)\times U(1), Sp(l)/U(1)\times U(p)\times U(l-p),$ \linebreak $1\leq p\leq l-1, SO(2l)/SO(2l-4)\times U(1)\times U(1), SO(2l)/U(p)\times U(l-p), \ \ 1\leq p\leq l-2,$ $E_6/SU(5)\times U(1)\times U(1), E_7/SO(10)\times U(1)\times U(1)$ are not $\lambda_1^{IK}$-extremal. 
\end{theorem}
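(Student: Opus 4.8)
The plan is to treat each of the six families listed in the statement exactly as in the two preceding theorems: fix the canonical complex invariant structure on $G/K$, identify the complementary simple roots $\Sigma-\Theta = \{\alpha_{i_0},\alpha_{j_0}\}$ with $ht(\alpha_{i_0})=1$, $ht(\alpha_{j_0})=2$ (Case 2 of Theorem \ref{4sum}), and then compute the five pieces of Lie-theoretic data needed to feed formula \ref{formula}: the element $\delta_{\m}$ written in terms of the fundamental weights $\Lambda_{i_0},\Lambda_{j_0}$, its squared norm $\|\delta_{\m}\|^2$ (using the normalization $\|\alpha\|^2=2$ for long roots or the appropriate scaling in types $B$, $C$), the four $T$-roots $\rho_1,\rho_2,\rho_3,\rho_4$ expressed in the $\varepsilon$-coordinates, the four numbers $\beta_j = B(\rho_j,\delta_{\m})$, and the dimensions $m_j = \dim\m_j = |\Delta(\cdot,\cdot)|$ together with $\dim_{\mathbb C}M = \sum_j m_j$. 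Once these are in hand, $\mu = \|\delta_{\m}\|^2/\dim_{\mathbb C}M$ is a rational function of the parameter ($l$, or $(l,p)$), and formula \ref{formula} becomes the vanishing of a single vector $\sum_{j=1}^4 (\mu/\beta_j - 1)\, m_j\, \rho_j$ in $(i\mathfrak c)^*$.

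The key step is then to observe that the $T$-roots $\rho_1,\dots,\rho_4$ span a space of dimension $\dim\mathfrak c = 2$, so there are exactly two linear relations among the four terms; equivalently, after writing everything in a basis of two of the $\varepsilon_i-\varepsilon_j$ differences one gets a $2$-component system of equations in the coefficients $(\mu/\beta_j-1)m_j$. Clearing denominators turns this into two polynomial equations in the family parameter(s), and the conclusion is that the only common solutions are outside the admissible range ($l\geq 4$, resp. $l\geq 5$, resp. $1\leq p\leq l-1$ or $1\leq p\leq l-2$), exactly as happened for $SO(2l)/U(1)\times U(l-1)$ in the previous theorem. For the two exceptional cases $E_6/SU(5)\times U(1)\times U(1)$ and $E_7/SO(10)\times U(1)\times U(1)$ there is no parameter: one simply checks that the single vector $\sum_j(\mu/\beta_j-1)m_j\rho_j$ is nonzero, using that three of the four $T$-roots are linearly independent (so the coefficients cannot all conspire to cancel). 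I would present this as a case-by-case verification, tabulating $(\delta_{\m}, \|\delta_{\m}\|^2, \rho_j, \beta_j, m_j, \dim_{\mathbb C}M)$ for each family and then exhibiting the resulting polynomial system and its (empty, within range) solution set, citing Mathematica for the root-finding exactly as in the $SU(l+m+n)$ case.

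The main obstacle is purely computational bookkeeping rather than conceptual: the families $Sp(l)/U(1)\times U(p)\times U(l-p)$ and $SO(2l)/U(p)\times U(l-p)$ carry two free parameters, so the dimensions $m_j$ are quadratic in $(l,p)$ and $\mu$ is a ratio of such quadratics, making the two polynomial equations genuinely bivariate; one must be careful that the parametrization of the four $\Delta(\cdot,\cdot)$ sets (hence the $m_j$) is correct, since the height-$1$/height-$2$ structure of $\Sigma-\Theta$ in types $B$, $C$, $D$ is more delicate than the type-$A$ situation and a miscount of even one $m_j$ would invalidate the system. A secondary subtlety is the normalization of the Cartan–Killing form in the non-simply-laced types $B$ and $C$: the value of $\|\delta_{\m}\|^2$ and of the $\beta_j$ depends on whether $\alpha_{i_0}$ or $\alpha_{j_0}$ is long or short, and this must be tracked consistently so that $\mu$ comes out right. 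Once the data are assembled correctly, the final step — showing the polynomial system has no solution in the admissible parameter range — is routine and can be delegated to a symbolic computation, so the real work is in the careful setup of each of the six cases.
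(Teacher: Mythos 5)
Your proposal follows essentially the same route as the paper: for each of the six families it fixes the canonical invariant complex structure, records $\delta_{\m}$, $\left\|\delta_{\m}\right\|^2$, the $T$-roots $\rho_j$, the numbers $\beta_j=B(\rho_j,\delta_{\m})$ and the dimensions $m_j$, and then shows that the vector $\sum_j(\mu/\beta_j-1)m_j\rho_j$ of formula \ref{formula} cannot vanish for admissible parameters, delegating the resulting polynomial systems to symbolic computation exactly as the paper does. The only caveat is in your treatment of the exceptional cases: since $\dim\mathfrak{c}=2$, three $T$-roots cannot be linearly independent in $(i\mathfrak{c})^*$, so the non-vanishing there must be checked (as the paper does) by expanding the combination in the ambient $\varepsilon$-coordinates rather than by appealing to independence of three of the $\rho_j$.
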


\begin{proof} 

Let us consider $SO(2l+1)/SO(2l-3)\times U(1)\times U(1)$, with a complex invariant structure $J$, corresponding to the canonical root system of $\mathfrak{so}(2l+1)$. Thus, $\delta_{\m}=2\Lambda_1+(2l-3)\Lambda_2$, where $\Lambda_1$ and $\Lambda_2$ are the fundamental weights associated with the simple roots $\Sigma-\Theta=\{\alpha_1, \alpha_2\}$. It follows that

\begin{itemize}
\item $\left\|\delta_{\m}\right\|^2=(2l-1)^2$.
\item $\rho_1=\varepsilon_1-\varepsilon_2$, $\rho_2=\varepsilon_2+\varepsilon_3$, $\rho_3=\varepsilon_1+\varepsilon_3, \rho_4=\varepsilon_1+\varepsilon_2$.
\item $\beta_1=B(\rho_1,\delta_{\m})=2$, $\beta_2=B(\rho_2,\delta_{\m})=2l-3$, $\beta_3=B(\rho_3,\delta_{\m})=2(l-1)$, $\beta_4=B(\rho_4,\delta_{\m})=4(l-1)$. 
\end{itemize}
Since $m_1=\emph{dim}\m_1=\left|\Delta(1,0)\right|=1$, $m_2=\dim\m_2=\left|\Delta(0,1)\right|=2l-3$ e $m_3=\dim\m_3=\left|\Delta(1,1)\right|=2l-3$ e $m_4=\dim\m_4=\left|\Delta(1,2)\right|=1$ we have $\dim_{\mathbb{C}}M=\dim_{\mathbb{C}}\m=4(l-1)$.  Hence, applying \ref{formula}, we have
$$\left(\dfrac{\mu}{\beta_1}-1\right) m_1\rho_1 +\left(\dfrac{\mu}{\beta_2}-1\right)m_2\rho_2 +\left(\dfrac{\mu}{\beta_3}-1\right)m_3\rho_3+\left(\dfrac{\mu}{\beta_4}-1\right)m_4\rho_4=$$
$$\left(\dfrac{(2l-1)^2}{8(l-1)}-1\right)(\varepsilon_1-\varepsilon_2)+\left(\dfrac{(2l-1)^2}{4(l-1)(2l-3)}-1\right)(2l-3)(\varepsilon_2+\varepsilon_3)$$ $$+\left(\dfrac{(2l-1)^2}{8(l-1)^2}-1\right)(2l-3)(\varepsilon_1+\varepsilon_3)+\left(\dfrac{(2l-1)^2}{16(l-1)^2}-1\right)(\varepsilon_1+\varepsilon_2)=$$
$$\left(-\dfrac{(3-2l)^2(2l-1)}{16(l-1)^2}\right)\varepsilon_1+\left(\dfrac{47-122l+100l^2-24l^3}{16(l-1)^2}\right)\varepsilon_2+\dfrac{43-104l+76l^2-16l^3}{8(l-1)^2}\varepsilon_3= 0$$

$$\Leftrightarrow\left\{
 \begin{array}{ccc}
 (3-2l)^2(2l-1)&=&0\\
 47-122l+100l^2-24l^3&=&0 \\
 43-104l+76l^2-16l^3&=&0\\
\end{array}
\right.,
$$
which has no solution in $\mathbb{N}$. Therefore, the Kähler-Einstein metric $\bar{g}$ on \linebreak $SO(2l+1)/SO(2l-3)\times U(1)\times U(1)$ is not $\lambda_1^{IK}-$extremal, for every $l\geq 2$.

Given $Sp(l)/U(1)\times U(p)\times U(l-p), \ \ 1\leq p\leq l-1$, with a complex invariant structure $J$, corresponding to the canonical root system of $\mathfrak{sp}(l)$, we have $\delta_{\m}=l\Lambda_p+(l-p+1)\Lambda_l$, where $\Lambda_p$ and $\Lambda_l$ are the fundamental weights associated with the simple roots $\Sigma-\Theta=\{\alpha_p, \alpha_l\}$. Thus,

\begin{itemize}
\item $\left\|\delta_{\m}\right\|^2=\dfrac{l^2p+2lp(l-p+1)+l(l-p+1)^2}{4(l+1)}$.
\item $\rho_1=\varepsilon_1-\varepsilon_l$, $\rho_2=\varepsilon_{p+1}+\varepsilon_l$, $\rho_3=\varepsilon_1+\varepsilon_l, \rho_4=\varepsilon_1+\varepsilon_p$.
\item $\beta_1=B(\rho_1,\delta_{\m})=l$, $\beta_2=B(\rho_2,\delta_{\m})=l-p+1$, $\beta_3=B(\rho_3,\delta_{\m})=2l+p-1$, $\beta_4=B(\rho_4,\delta_{\m})=3l+p-1$. 
\end{itemize}

Since $m_1=\dim\m_1=\left|\Delta(1,0)\right|=p(l-p)$, $m_2=\dim\m_2=\left|\Delta(0,1)\right|=(l-p)(l-p+1)$ e $m_3=\dim\m_3=\left|\Delta(1,1)\right|=p(l-p)$ and $m_4=\dim\m_4=\left|\Delta(1,2)\right|=p(p+1)$, so that $\dim_{\mathbb{C}}M=\dim_{\mathbb{C}}\m=l(l+1)$.  Applying \ref{formula}, we obtain
$$\left(\dfrac{\mu}{\beta_1}-1\right) m_1\rho_1 +\left(\dfrac{\mu}{\beta_2}-1\right)m_2\rho_2 +\left(\dfrac{\mu}{\beta_3}-1\right)m_3\rho_3+\left(\dfrac{\mu}{\beta_4}-1\right)m_4\rho_4=$$
{\footnotesize $$-\dfrac{p (3 l^5 - (-1 + p)^3 p (1 + p) - 2 l^4 (3 + 2 p) -l (-1 + p)^2 (-1 + 3 p + 3 p^2) + l^3 (-8 + 12 p + 11 p^2) + 
   l^2 (2 - 3 p - 2 p^2 + 3 p^3))}{l (1 + l) (-1 + 2 l + p) (-1 + 3 l + p)}\varepsilon_1$$} $$-\dfrac{(l - p) (3 l^3 + l^2 (5 - 4 p) - 4 (-1 + p) p + 2 l (1 + p))}{l (1 + l) (-1 + 2 l + p)}\varepsilon_l$$ $$+\left(\dfrac{(l - p) (1 + 2 l - p) p}{l+1}\right)\varepsilon_{p+1}$$ $$+\left(-\dfrac{p (1 + p) (-2 + 2 l^2 + p + p^2)}{(1 + l) (-1 + 3 l + p)}\right)\varepsilon_p.$$ This linear combination must be zero if and only if each of the above coefficients is zero, since $\{\varepsilon_1,\varepsilon_l,\varepsilon_{p+1}, \varepsilon_p\}.$ is L.I. Using Mathematica software, we can see that the corresponding homogeneous system has a solution only if $p=0$, which allows us to conclude that the Kähler-Einstein metric on the flag manifold, in this case, is not $\lambda_1^{IK}-$extremal, since $1\leq p\leq l-1$.

For $M=SO(2l)/SO(2l-4)\times U(1)\times U(1), \ \ l\geq 4$, with a complex invariant structure $J$, corresponding to the canonical root system of $\mathfrak{so}(2l)$, we have $\delta_{\m}=2\Lambda_1+2(l-2)\Lambda_2$, where $\Lambda_1$, $\Lambda_2$ are the fundamental weights associated with the simple roots $\Sigma-\Theta=\{\alpha_1, \alpha_2\}$. Accordingly,  

 \begin{itemize} 
\item $\left\|\delta_{\m}\right\|^2=4(5-6l+2l^2)$
\item $\rho_1=\varepsilon_1-\varepsilon_2$, $\rho_2=\varepsilon_2-\varepsilon_3$, $\rho_3=\varepsilon_1+\varepsilon_3$, $\rho_4=\varepsilon_1+\varepsilon_2$.
\item $\beta_1=B(\rho_1,\delta_{\m})=2$, $\beta_2=B(\rho_2,\delta_{\m})=2(l-2)$, $\beta_3=B(\rho_3,\delta_{\m})=2(l-1)$, $\beta_4=B(\rho_4,\delta_{\m})=2(2l-3)$.
\end{itemize} 

Since $m_1=\dim\m_1=\left|\Delta(1,0)\right|=1$, $m_2=\dim\m_2=\left|\Delta(0,1)\right|=2(l-2)$, $m_3=\dim\m_3=\left|\Delta(1,1)\right|=2(l-2)$ e $m_4=\dim\m_4=\left|\Delta(1,2)\right|=1$, one has $\dim_{\mathbb{C}}M=\dim_{\mathbb{C}}\m=2(2l-3)$.  Applaying \ref{formula}, we have

$$\left(\dfrac{\mu}{\beta_1}-1\right) m_1\rho_1 +\left(\dfrac{\mu}{\beta_2}-1\right)m_2\rho_2 +\left(\dfrac{\mu}{\beta_3}-1\right)m_3\rho_3+\left(\dfrac{\mu}{\beta_4}-1\right)m_4\rho_4=$$
$$\dfrac{2 (26 - 57 l + 46 l^2 - 16 l^3 + 2 l^4)}{(3 - 2 l)^2 (-1 + l)}\varepsilon_1+\dfrac{26 - 44 l + 24 l^2 - 4 l^3}{(3 - 2 l)^2}\varepsilon_2+\dfrac{2}{l-1}\varepsilon_3\neq 0,$$
since $\{\varepsilon_1,\varepsilon_2,\varepsilon_3\}$ is L.I. Hence, the métrica Kähler-Einstein metric on $M$ is not $\lambda_1^{IK}-$extremal.

Considering the partial flag manifold associated with the Lie algebra $\mathfrak{so}(2l)$, namely $M=SO(2l)/SO(2l-4)\times U(p)\times U(l-p), \ \ 2\leq p\leq l-2$, with a complex invariant structure $J$, corresponding to the canonical root system of $\mathfrak{so}(2l)$, we have that $\delta_{\m}=l\Lambda_p+2(l+p-1)\Lambda_l$, where $\Lambda_p$, $\Lambda_l$ are the fundamental weights associated with the simple roots $\Sigma-\Theta=\{\alpha_p, \alpha_l\}$. Thus, 

 \begin{itemize} 
\item $\left\|\delta_{\m}\right\|^2=l^2p+2lp(l+p-1)+(l+p-1)^2$
\item $\rho_1=\varepsilon_{p+1}-\varepsilon_{l-1}$, $\rho_2=\varepsilon_1-\varepsilon_{p+1}$, $\rho_3=\varepsilon_1+\varepsilon_{l-1}$, $\rho_4=\varepsilon_1+\varepsilon_2$.
\item $\beta_1=B(\rho_1,\delta_{\m})=2(l+p-1)$, $\beta_2=B(\rho_2,\delta_{\m})=l$, $\beta_3=B(\rho_3,\delta_{\m})=3l+2p-2$, $\beta_4=B(\rho_4,\delta_{\m})=4l+2p-2$.
\end{itemize} 

Since $m_1=\dim\m_1=\left|\Delta(1,0)\right|=p(l-p)$, $m_2=\dim\m_2=\left|\Delta(0,1)\right|=\frac{(l-p)(l-p-1)}{2}$, $m_3=\dim\m_3=\left|\Delta(1,1)\right|=p(l-p)$ e $m_4=\dim\m_4=\left|\Delta(1,2)\right|=\frac{p(p-1)}{2}$, whence $\dim_{\mathbb{C}}M=\dim_{\mathbb{C}}\m=\dfrac{l^2-2p^2+l(2p-1)}{2}$. Then, $\mu=\dfrac{2(l^2p+2lp(l+p-1)+(l+p-1)^2)}{l^2-2p^2+l(2p-1)}$ and therefore
\newpage
$$\left(\dfrac{\mu}{\beta_1}-1\right) m_1\rho_1 +\left(\dfrac{\mu}{\beta_2}-1\right)m_2\rho_2 +\left(\dfrac{\mu}{\beta_3}-1\right)m_3\rho_3+\left(\dfrac{\mu}{\beta_4}-1\right)m_4\rho_4=$$

{\small $$\left(\left(\dfrac{\mu}{2(l+p-1)}-1\right)p(l-p)+\left(-\dfrac{\mu}{l}+1\right)\dfrac{(l-p)(l-p-1)}{2}\right)\varepsilon_{p+1}$$ $$+\left(\left(\dfrac{\mu}{2(l+p-1)}-1\right)p(l-p)+\left(\dfrac{\mu}{3l+2p-2}-1\right)p(l-p)\right)\varepsilon_{l-1}$$ $$+\left(\left(\dfrac{\mu}{l}-1\right)\dfrac{(l-p)(l-p-1)}{2}+\left(\dfrac{\mu}{3l+2p-2}-1\right)p(l-p)+\left(\dfrac{\mu}{4l+2p-2}-1\right)\dfrac{p(p-1)}{2}\right)\varepsilon_1$$ $$+\left(\dfrac{\mu}{4l+2p-2}-1\right)\dfrac{p(p-1)}{2}\varepsilon_2= 0$$}

\small{ 
$$\Leftrightarrow\left\{
 \begin{array}{ccc}
 \left(\dfrac{\mu}{2(l+p-1)}-1\right)p(l-p)+\left(-\dfrac{\mu}{l}+1\right)\dfrac{(l-p)(l-p-1)}{2}&=&0\\
 \left(\dfrac{\mu}{2(l+p-1)}-1\right)p(l-p)+\left(\dfrac{\mu}{3l+2p-2}-1\right)p(l-p)&=&0 \\
 \left(\dfrac{\mu}{l}-1\right)\dfrac{(l-p)(l-p-1)}{2}+\left(\dfrac{\mu}{3l+2p-2}-1\right)p(l-p)+\left(\dfrac{\mu}{4l+2p-2}-1\right)\dfrac{p(p-1)}{2}&=&0\\
\left(\dfrac{\mu}{4l+2p-2}-1\right)\dfrac{p(p-1)}{2}&=&0
\end{array}
\right.,
$$}since $\{\varepsilon_1,\varepsilon_2,\varepsilon_{l-1}, \varepsilon_{p+1}\}$ is L.I. Using the Mathematica software, we can see that the above system has no solution. Therefore, the Kähler-Einstein metric on the flag manifold $M$ is not $\lambda_1^{IK}$-extremal.


In the case of the partial flag manifold $M=E_6/SU(5)\times U(1)\times U(1)$, with a complex invariant structure $J$, corresponding to the canonical root system of $\mathfrak{e}_6$ and with a subset of complementary roots $\Sigma-\Theta=\{\alpha_1, \alpha_2\}$, one has $\delta_{\m}=2\Lambda_1+8\Lambda_2$, where $\Lambda_1$ e $\Lambda_2$ are the fundamental weights associated with the simple roots $\alpha_1$ and $\alpha_2$. Hence, 

 \begin{itemize}
\item $\left\|\delta_{\m}\right\|^2=\frac{568}{13}$
\item $\rho_1=\varepsilon_2-\varepsilon_3$, $\rho_2=-(\varepsilon_2+\varepsilon_4+\varepsilon_7)$, $\rho_3=-(\varepsilon_3+\varepsilon_4+\varepsilon_7)$, $\rho_4=-(\varepsilon_4+\varepsilon_5+\varepsilon_6)$
\item $\beta_1=B(\rho_1,\delta_{\m})=2, B(\rho_2,\delta_{\m})=8$, $\beta_3=B(\rho_3,\delta_{\m})=10$, $\beta_4=B(\rho_4,\delta_{\m})=18$.
\end{itemize} 

Since $\dim\m_1=1, \dim\m_2=\dim\m_3=10$ e $\dim\m_4=5$, we have $\dim_{\mathbb{C}}M=\dim_{\mathbb{C}}\m=26$. Applying \ref{formula}, we get the following linear combination 

$$\left(\dfrac{\mu}{\beta_1}-1\right) m_1\rho_1 +\left(\dfrac{\mu}{\beta_2}-1\right)m_2\rho_2 +\left(\dfrac{\mu}{\beta_3}-1\right)m_3\rho_3+\left(\dfrac{\mu}{\beta_4}-1\right)m_4\rho_4=$$
$$-\frac{309}{13}\varepsilon_2-\frac{709}{13}\varepsilon_3-\frac{769}{13}\varepsilon_4-\frac{1018}{13}\varepsilon_7-\frac{835}{117}\varepsilon_6\neq 0$$

Therefore, the criterion given by formula \ref{formula} assures us that the Kähler-Einstein metric on M is not \linebreak $\lambda_1^{IK}-$extremal.

The flag manifold $M=E_7/SO(10)\times U(1)\times U(1)$, with a complex invariant structure $J$, corresponding to the canonical root system of $\mathfrak{e}_7$ and with a subset of complementary roots $\Sigma-\Theta=\{\alpha_1, \alpha_2\}$, is such that $\delta_{\m}=2\Lambda_1+12 \Lambda_2$, where $\Lambda_1$ e $\Lambda_2$ are the fundamental weights associated with the simple roots $\alpha_1$ and $\alpha_2$. In this way, 

 \begin{itemize}
\item $\left\|\delta_{\m}\right\|^2=\frac{678}{43}$
\item $\rho_1=\varepsilon_2-\varepsilon_3$, $\rho_2=-(\varepsilon_2+\varepsilon_4+\varepsilon_8)$, $\rho_3=-(\varepsilon_3+\varepsilon_4+\varepsilon_8)$, $\rho_4=-(\varepsilon_4+\varepsilon_6+\varepsilon_7)$
\item $\beta_1=B(\rho_1,\delta_{\m})=2, B(\rho_2,\delta_{\m})=12$, $\beta_3=B(\rho_3,\delta_{\m})=14$, $\beta_4=B(\rho_4,\delta_{\m})=26$.
\end{itemize} 

SInce $\dim\m_1=1, \dim\m_2=\dim\m_3=16$ and $\dim\m_4=10$, we have $\dim_{\mathbb{C}}M=\dim_{\mathbb{C}}\m=43$. Applying \ref{formula}, we obtain 

$$\left(\dfrac{\mu}{\beta_1}-1\right) m_1\rho_1 +\left(\dfrac{\mu}{\beta_2}-1\right)m_2\rho_2 +\left(\dfrac{\mu}{\beta_3}-1\right)m_3\rho_3+\left(\dfrac{\mu}{\beta_4}-1\right)m_4\rho_4=$$
$$\frac{40}{3913}\left[182\varepsilon_2-871\varepsilon_3-304\varepsilon_4+385\varepsilon_6+385\varepsilon_7-689\varepsilon_8\right]\neq 0$$

Hence, from \ref{formula}, we can conclude that the corresponding Kähler-Einstein metric on $M$ is not $\lambda_1^{IK}$-extremal.

\end{proof}



\bibliography
{unsrt}  


\end{document}